\newtheorem{theorem}{Theorem}[section]
\newtheorem{lemma}[theorem]{Lemma}
\newtheorem{proposition}[theorem]{Proposition}
\newtheorem{definition}[theorem]{Definition}
\theoremstyle{definition}
\newtheorem{remark}[theorem]{Remark}
\numberwithin{equation}{section}
\newcommand*\N{\mathcal{N}}
\newcommand{\al} {\alpha}
\newcommand{\Om} {\Omega}
\newcommand{\Gr} {\nabla}
\newcommand{\no} {\nonumber}
\newcommand{\noi} {\noindent}
\newcommand{\ra} {\rightarrow}
\def\dx{\,{\rm d}x}
\def\dy{\,{\rm d}y}
\def\C{{\mathcal C}}
\def\R{{\mathbb R}}
\def\N{{\mathbb N}}
\def\({{\Big(}}
\def\){{\Big)}}
\def\dx{\,{\rm d}x}
\DeclarePairedDelimiter\abs{\lvert}{\rvert}%
\DeclarePairedDelimiter\norm{\lVert}{\rVert}%
\def\wps{{W_0^{s,p}(\Om)}}
\newcommand{\ii}{\int\limits_}
\title[Faber-Krahn inequality for mixed local-nonlocal operator]{Strict Faber-Krahn type inequality for the mixed local-nonlocal operator under polarization$^\bigstar $}
\author[K. Ashok~Kumar and N. Biswas]{K Ashok Kumar\,\orcidlink{0000-0002-4098-3705} \and Nirjan Biswas$^\dagger$\,\orcidlink{0000-0002-3528-8388}}
\address{\rm Tata Institute of Fundamental Research Centre For Applicable Mathematics \\ Post Bag No 6503, Sharada Nagar, Bengaluru 560065, India}
\email[Ashok]{ashok23@tifrbng.res.in, srasoku@gmail.com} \email[Nirjan]{nirjan22@tifrbng.res.in}
\thanks{$^\bigstar $This work is partially supported by the Department of Atomic Energy, Government of India, under project no. 12-R$\&$D-TFR-5.01-0520.}
\thanks{$^\dagger$Corresponding author}
\thanks{Data sharing does not apply to this article as no data sets were generated or analyzed during the current study.}
\subjclass[2020]{Primary 35R11, 49Q10; Secondary 35B51, 47J10}
\keywords{mixed local-nonlocal operator, polarizations, strong comparison principle, strict Faber-Krahn inequality, strong maximum principle.}
\begin{document}
\begin{abstract}
    Let $\Omega \subset \mathbb{R}^d$ with $d\geq 2$ be a bounded domain of class $\C^{1,\beta }$ for some $\beta \in (0,1)$. For $p\in (1, \infty )$ and $s\in (0,1)$, let $\Lambda ^s_{p}(\Omega )$ be the first eigenvalue of the mixed local-nonlocal operator $-\Delta _p+(-\Delta _p)^s$ in $\Omega $ with the homogeneous nonlocal Dirichlet boundary condition. We establish a strict Faber-Krahn type inequality for $\Lambda _{p}^s(\cdot )$ under polarization. As an application of this strict inequality, we obtain the strict monotonicity of  $\Lambda _{p}^s(\cdot )$ over annular domains and characterize the rigidity property of the balls in the classical Faber-Krahn inequality for $-\Delta _p+(-\Delta _p)^s$.
\end{abstract}
\maketitle
\section{Introduction}
For $p \in (1, \infty )$ and $s \in (0,1)$, we consider the following nonlinear eigenvalue problem on a bounded domain $\Omega \subset \mathbb{R}^d$ with homogeneous Dirichlet boundary condition:
\begin{equation}\label{Eigen_eqn}
    \begin{aligned}
        \mathcal{L}_p^s u&= \Lambda \abs{u}^{p-2} u  \mbox{ in } \Omega ,\\
        u&=0 \mbox{ in } \mathbb{R}^d\setminus \Omega,
    \end{aligned}
\end{equation}
where $\Lambda \in \mathbb{R}$ and $\mathcal{L}_p^s := -\Delta _p + (-\Delta _p)^s$ is a mixed local-nonlocal operator associated with the classical $p$-Laplace operator $\Delta _p u = {\rm div}(|\nabla u|^{p-2}\nabla u)$, and the fractional $p$-Laplace operator 
\begin{equation*}
    (-\Delta _p)^s u (x) = 2 \, \textrm{P.V.}\int _{\mathbb{R}^d} \frac{|u(x)-u(y)|^{p-2}(u(x)-u(y))}{|x-y|^{d+ps}} \,\mathrm{d}y
\end{equation*}
where P.V. denotes the principle value of the integral. We consider the following Sobolev spaces
\begin{align*}
     &W^{1,p}_0(\Omega) := \{u\in L^p(\mathbb{R}^d ) : \norm{\nabla u}_{L^p(\mathbb{R}^d )} < \infty \text{ with } u=0 \text{ in }\mathbb{R}^d \setminus \Omega \},\\ 
     & W_0^{s,p}(\Omega) := \{ u \in L^{p} (\mathbb{R}^d ) : [u]_{s,p} < \infty \text{ and } u = 0 \text{ in } \mathbb{R}^d \setminus \Omega\},
\end{align*}
where $\norm{\nabla \cdot}_{L^p(\mathbb{R}^d )}$ is the $L^p$-norm of the gradient, and $[\cdot ]_{s,p}$ is the Gagliardo seminorm in $\mathbb{R}^d$ that are given by 
\begin{align*}
    \norm{\nabla u}_{L^p(\mathbb{R}^d )}=\left(\int _{\mathbb{R}^d } |\nabla u|^p \dx \right)^{\frac{1}{p}} \; \mbox{ and } \; [u]_{s,p}= \left( \int _{\mathbb{R}^d}\int _{\mathbb{R}^d} \frac{|u(x)-u(y)|^p}{|x-y|^{d+sp}}\dy \dx \right)^{\frac{1}{p}}.
\end{align*}
In \cite[Lemma 2.1]{BSM2022}, the authors have shown that 
\begin{align*}
    \displaystyle [u]_{s,p} \le C(d,s,p, \Omega) \norm{\nabla u}_{L^p(\Omega )} \; \text{ for every } u \in W_0^{1,p}(\Omega).
\end{align*}
By this embedding we consider the solution space for~\eqref{Eigen_eqn} to be $W^{1,p}_0(\Omega)$. We say that $u \in W_0^{1,p}( \Omega)$ is a weak solution to~\eqref{Eigen_eqn} if it satisfies the following identity:
\begin{multline}\label{weak}
    \int_{\Omega } \abs{ \Gr u(x)}^{p-2} \Gr u(x) \cdot \Gr \phi(x) \, \dx + \int _{\mathbb{R}^d} \int _{\mathbb{R}^d} \frac{|u(x)-u(y)|^{p-2}(u(x)-u(y))}{\abs{x-y}^{d+ps}} (\phi(x)-\phi(y))\, \dy \dx  \\
    = \Lambda \int_{\Om} \abs{ u(x) }^{p-2} u(x) \phi(x) \, \dx \text{ for any } \phi \in  W_0^{1,p}( \Omega).
\end{multline}
In addition, if $u$ is non-zero, the real number $\Lambda$ is called an eigenvalue of~\eqref{Eigen_eqn}, and $u$ is a corresponding eigenfunction. From the classical and factional Poincar\'e inequalities~\cite[Lemma~2.4]{BrLiPa}, the least eigenvalue for~\eqref{Eigen_eqn} exists, and it is given by the following variational characterization 
\begin{equation}\label{var-char-Ineq-Mixed}
    \Lambda _{\mathcal{L}_p^s}(\Omega ) := \inf \left\{\norm{\nabla u}_{L^p(\Omega )}^p + [u]^p_{s,p}: u \in W_0^{1,p}(\Omega) \setminus \{0\} \text{ with } \|u\|_{L^p(\Omega )}=1\right\}.
\end{equation}
By the standard variational methods, there exists a minimizer $u\in W_0^{1,p}(\Omega) \setminus \{0\}$ for~\eqref{var-char-Ineq-Mixed}, and it is an eigenfunction of~\eqref{Eigen_eqn} corresponding to the eigenvalue $\Lambda _{\mathcal{L}^s_p}(\Omega )$. Without loss of generality, we assume that $u$ is non-negative, since $\abs{v} \in W^{1,p}_0( \Omega)$ and $\norm{ \Gr (\abs{v})}_{L^p(\Omega )}+[\abs{v}]_{s,p}  \leq \norm{ \Gr v}_{L^p(\Omega )}+[v]_{s,p}$ holds for every $v \in W^{1,p}_0( \Omega)$.  In Proposition~\ref{SMP}, we prove the strong maximum principle for $\mathcal{L}_p^s$, further implying that $u$ is strictly positive in $\Omega $.  

In this paper, we are concerned with the Faber-Krahn inequality for $\mathcal{L}^s_p$. The classical Faber-Krahn inequality for $\Delta _p$ states that: \emph{for any bounded open set $\Omega \subset \mathbb{R}^d$,}
\begin{equation}\label{Classical-FK}
    \lambda _1(\Omega ^*) \leq \lambda _1(\Omega ),
\end{equation}
\emph{with the equality only for $\Omega =\Omega ^*$ (up to a translation),} where $\lambda _1(\Omega )$ is the first Dirichlet eigenvalue of $-\Delta _p$ on $\Omega $, and $\Omega ^*$ is the ball centered at the origin in $\mathbb{R}^d$ with the same Lebesgue measure as that of $\Omega $. The \emph{rigidity property of the balls} (i.e., the balls are the unique sets realizing the equality in the Faber-Krahn inequality~\eqref{Classical-FK}) is discussed in~\cite{AFT98, Bhatta99, Anisa15, DanersKennedy, Kesavan88}. For $p\in (1,\infty )$ and $s\in (0,1)$, the Faber-Krahn inequality for the nonlocal operator $(-\Delta _p)^s$ is demonstrated in~\cite[Theorem~3.5]{BrLiPa}. Moreover, the Faber-Krahn inequality for the mixed local-nonlocal operator $\mathcal{L}^s_p$ is proved in~\cite[Theorem~1.1]{BSVV2023} when $p=2$, and in~\cite[Theorem~4.1]{BDVV23} when $p \neq 2$. To obtain the rigidity of this inequality, the authors of~\cite{BrLiPa, BSVV2023, BDVV23} used a strict rearrangement inequality for the seminorm $[\cdot ]_{s,p}$ involving the Schwarz symmetrization, as demonstrated in~\cite[Theorem~A.1]{Frank-Robert08} under the dimension restriction $d>ps$. Observe that $\Omega ^*$ is the Schwarz symmetrization of the domain $\Omega $, and the P\'olya-Szeg\"o inequality~\cite[Page 4818]{Beckner92} for the functions in $W^{1,p}_0(\Omega )$ under the Schwarz symmetrization gives the Faber-Krahn inequality. In~\cite{V2006, BrockSolynin2000}, the authors have shown that the Schwarz symmetrization of an $L^p(\mathbb{R}^d)$-function can be approximated by a sequence of polarizations. Polarization is a simple symmetrization in $\mathbb{R}^d$, also called a two-point rearrangement. Motivated by this approximation, one of the objectives of this paper is to generalize the Faber-Krahn inequality for $\mathcal{L}^s_p$ under polarization and also characterize the equality case. In addition, we show the rigidity property of the balls in the classical Faber-Krahn inequality for $\mathcal{L}^s_p$, using this characterization, without any restriction on the parameters $d, p,$ and $s$. Recently, the strict Faber-Krahn inequality under polarization is established in~\cite{Anoop-Ashok23} for $-\Delta _p$ where $p>\frac{2d+2}{d+2}$, and in~\cite{AK-NB2023Mono} for $(-\Delta _p)^s$ where $p\in (1,\infty )$ and $s\in (0,1)$.  

Now, we recall the notion of polarization for sets in $\mathbb{R}^d$, introduced by Wolontis~\cite{Wolontis}. An open affine-halfspace in $\mathbb{R}^d$ is called a~\emph{polarizer}. The set of all polarizers in $\mathbb{R}^d$ is denoted by $\mathcal{H}$. We observe that, for any polarizer $H\in \mathcal{H}$ there exist $h\in \mathbb{S}^{d-1}$ and $a\in \mathbb{R}$ such that $H = \left\{x\in \mathbb{R}^d : x\cdot h <a \right\}.$ For $H\in \mathcal{H}$, let $\sigma _H$ be the reflection in $\mathbb{R}^d$ with respect to $\partial H$, i.e., $\sigma _H(x) = x -2(x\cdot h -a)h$ for any $x\in \mathbb{R}^d$. The reflection of $A\subseteq \mathbb{R}^d$ with respect to $\partial H$ is $\sigma _H(A)=\{\sigma _H(x): x\in A\}$. It is straightforward to verify $\sigma _H(A\cup B)=\sigma _H(A)\cup \sigma _H(B),$ and $\sigma _H(A\cap B)=\sigma _H(A)\cap \sigma _H(B)$  for any $A, B\subseteq \mathbb{R}^d$. Now, we define the polarization of sets in $\mathbb{R}^d$, see~\cite[Definition~1.1]{Anoop-Ashok23}.
\begin{definition}
Let $H\in \mathcal{H}$ be a polarizer and $\Omega \subseteq \mathbb{R}^d$. The polarization $P_H(\Omega )$ of $\Omega $ with respect to $H$ is defined as
\begin{align*}
    P_H(\Omega ) &= \left[(\Omega \cup \sigma _H(\Omega ))\cap H\right] \cup [\Omega \cap \sigma _H(\Omega )].
\end{align*}
\end{definition}
It is easily observed that $P_H$ takes open sets to open sets, and $P_H$ is a rearrangement (i.e., it respects the set inclusion and preserves the measure) on $\mathbb{R}^d$. Now, we state our main results.
\begin{theorem}\label{Thm-unified} 
Let $p \in (1, \infty)$ and $s \in (0,1)$. Let $H\in \mathcal{H}$ be a polarizer, and $\Omega \subset \mathbb{R}^d$ be a bounded domain. Then 
        \begin{equation}\label{Polarized FK}
            \Lambda _{\mathcal{L}^s_p}(P_H(\Omega )) \leq \Lambda _{\mathcal{L}^s_p}(\Omega ).
        \end{equation}
     Further, if $\Omega $ is of class $\mathcal{C}^{1, \beta}$ for some $\beta \in (0,1)$ and $\Omega \neq P_H(\Omega ) \neq \sigma _H(\Omega )$ then 
        \begin{equation}\label{StrictFKtype}
            \Lambda _{\mathcal{L}^s_p}(P_H(\Omega )) < \Lambda _{\mathcal{L}^s_p}(\Omega ).
        \end{equation}
\end{theorem}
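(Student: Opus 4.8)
The plan is to compare the Rayleigh-type quotient~\eqref{var-char-Ineq-Mixed} on $\Omega$ with the one on $P_H(\Omega)$ by polarizing the first eigenfunction, and then to extract rigidity from the equality case of the fractional P\'olya-Szeg\"o inequality under polarization. For a non-negative $u\in W_0^{1,p}(\Omega)$, extended by zero to $\mathbb{R}^d$, let $u^H$ be its polarization with respect to $H$, so $u^H=\max\{u,u\circ\sigma_H\}$ on $H$ and $u^H=\min\{u,u\circ\sigma_H\}$ on $\mathbb{R}^d\setminus\overline{H}$. First I would record three facts: (i) $u^H\in W_0^{1,p}(P_H(\Omega))$ with $\norm{u^H}_{L^p(\mathbb{R}^d)}=\norm{u}_{L^p(\mathbb{R}^d)}$, since polarization preserves measure and annihilates $u^H$ outside $P_H(\Omega)$; (ii) the local energy is \emph{preserved}, $\norm{\nabla u^H}_{L^p(\mathbb{R}^d)}=\norm{\nabla u}_{L^p(\mathbb{R}^d)}$, because polarization only interchanges $\nabla u$ and $\nabla(u\circ\sigma_H)$ over the two half-spaces without otherwise altering them; and (iii) the Gagliardo seminorm does not increase, $[u^H]_{s,p}^p\le[u]_{s,p}^p$ (the polarization inequality for $[\cdot]_{s,p}$, see~\cite{AK-NB2023Mono}). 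Taking $u$ to be a non-negative minimizer of~\eqref{var-char-Ineq-Mixed} on $\Omega$ and using $u^H$ as a competitor in the variational characterization of $\Lambda_{\mathcal{L}_p^s}(P_H(\Omega))$, facts (i)--(iii) give $\Lambda_{\mathcal{L}_p^s}(P_H(\Omega))\le\norm{\nabla u^H}_{L^p(\mathbb{R}^d)}^p+[u^H]_{s,p}^p\le\norm{\nabla u}_{L^p(\Omega)}^p+[u]_{s,p}^p=\Lambda_{\mathcal{L}_p^s}(\Omega)$, which is~\eqref{Polarized FK}.

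For~\eqref{StrictFKtype}, assume towards a contradiction that $\Lambda_{\mathcal{L}_p^s}(P_H(\Omega))=\Lambda_{\mathcal{L}_p^s}(\Omega)$. Then equality holds along the whole chain above, and since the local term is automatically preserved by (ii), this forces $[u^H]_{s,p}^p=[u]_{s,p}^p$. The decisive step is the equality analysis of the polarization inequality (iii). Decomposing $\mathbb{R}^d\times\mathbb{R}^d$ into the four blocks cut out by $H$ and $\sigma_H(H)$ and reparametrizing by $H\times H$, one writes $[u]_{s,p}^p-[u^H]_{s,p}^p$ as the integral over $H\times H$ of a pointwise non-negative integrand; for a ``discordant'' pair $(x,y)$ --- one point at which $u$ exceeds its reflection and one at which it is exceeded --- this integrand is the product of the factor $\abs{x-y}^{-(d+sp)}-\abs{x-\sigma_H(y)}^{-(d+sp)}$, which is strictly positive off the null set $\{x\in\partial H\}\cup\{y\in\partial H\}$, with a two-point rearrangement term that is strictly positive by the strict convexity of $t\mapsto\abs{t}^p$ for $p>1$. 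Hence equality forces the sets $\{x\in H:u(x)>u(\sigma_H(x))\}$ and $\{x\in H:u(x)<u(\sigma_H(x))\}$ to have product of Lebesgue measures zero, i.e.\ $u^H=u$ a.e.\ in $\mathbb{R}^d$ or $u^H=u\circ\sigma_H$ a.e.\ in $\mathbb{R}^d$; this is precisely the equality case for the fractional polarization inequality established in~\cite{AK-NB2023Mono}.

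It remains to turn this dichotomy into a contradiction with $\Omega\neq P_H(\Omega)\neq\sigma_H(\Omega)$. Since $u$ is a non-negative eigenfunction of $\mathcal{L}_p^s$ on the connected open set $\Omega$ and $u=0$ a.e.\ on $\mathbb{R}^d\setminus\Omega$, the strong maximum principle (Proposition~\ref{SMP}) yields $u>0$ in $\Omega$, so $\{u>0\}=\Omega$ up to a null set; unravelling the definition of $u^H$ then gives $\{u^H>0\}=P_H(\{u>0\})=P_H(\Omega)$ up to a null set. If $u^H=u$ a.e., then $P_H(\Omega)$ and $\Omega$ agree up to a null set, and since both are open they coincide --- contradicting $\Omega\neq P_H(\Omega)$; if instead $u^H=u\circ\sigma_H$ a.e., then in the same way $P_H(\Omega)=\sigma_H(\Omega)$, contradicting $P_H(\Omega)\neq\sigma_H(\Omega)$. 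Thus~\eqref{StrictFKtype} follows. I expect the main obstacle to be the equality case in step (iii): proving the sharp two-point rearrangement inequality underlying the fractional polarization inequality and pinning down exactly when it degenerates (the ``already polarized'' and ``reflected'' configurations). The $\mathcal{C}^{1,\beta}$ regularity of $\Omega$ is used to secure enough regularity of the first eigenfunction for the strong maximum principle to apply --- and, should one prefer the alternative route of observing that $u^H$ is then a first eigenfunction of $P_H(\Omega)$ and invoking a strong comparison principle, for the Hopf-type boundary estimates such an argument requires.
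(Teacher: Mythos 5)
Your first part is identical to the paper's: polarize a nonnegative first eigenfunction of $\Omega$ and use Proposition~\ref{norm-properties} in the variational characterization. For the strict inequality, however, you take a genuinely different route. The paper argues that, under equality, $P_H(u)$ is itself a first eigenfunction on $P_H(\Omega)$, so that $u$ and $P_H u$ both satisfy the eigenvalue equation on $\Omega\cap H$ with $P_H u\ge u$ there; the authors then develop and invoke a strong comparison principle for $\mathcal L_p^s$ involving a function and its polarization (Proposition~\ref{SCP}), reaching a contradiction via the non-empty interiors of $A_H(\Omega)$ and $B_H(\Omega)$. You instead go straight to the equality case of the inequality $[P_H u]_{s,p}\le[u]_{s,p}$: decomposing $\mathbb R^d\times\mathbb R^d$ into the four blocks determined by $H$, writing the deficit as $\int_H\int_H\bigl(|x-y|^{-(d+sp)}-|x-\sigma_H(y)|^{-(d+sp)}\bigr)\Phi(x,y)\,\dy\dx$ with $\Phi\ge0$ and $\Phi>0$ precisely on discordant pairs (by strict convexity of $t\mapsto|t|^p$, $p>1$), you deduce that equality forces $P_Hu=u$ a.e.\ or $P_Hu=u\circ\sigma_H$ a.e., and then convert this into $P_H(\Omega)=\Omega$ or $P_H(\Omega)=\sigma_H(\Omega)$ via the strong maximum principle and the identity $\{P_Hu>0\}=P_H(\{u>0\})$. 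This is more elementary and self-contained than the paper's argument: it bypasses the strong comparison principle and, more strikingly, does not appear to need the $\mathcal C^{1,\beta}$ hypothesis on $\Omega$ at all, since Proposition~\ref{SMP} requires only a bounded domain. The paper's route, in exchange, produces Proposition~\ref{SCP} as a reusable tool.

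Two points to tighten. First, ``$P_H(\Omega)$ and $\Omega$ agree up to a null set, and since both are open they coincide'' is not valid for general pairs of open sets; the correct justification is that $P_H(\Omega)\triangle\Omega=A_H(\Omega)\cup\sigma_H(A_H(\Omega))$, so a null symmetric difference forces $A_H(\Omega)$ to have empty interior, and Proposition~\ref{Propo-ball}-(iv)(a) then yields $P_H(\Omega)=\Omega$ (and symmetrically with $B_H(\Omega)$ and part (iv)(b) in the other case). Second, your explanation of the role of $\mathcal C^{1,\beta}$ is off: Proposition~\ref{SMP} does not require boundary regularity. In the paper's proof the hypothesis enters through Proposition~\ref{regular} to obtain continuity of $u$ and $P_H u$ up to the boundary, which is what permits the decomposition of $P_H(\Omega)\cap H$ into the open set $\mathcal M_u$ and the relatively closed set $\mathcal N_u$ and the ensuing connectedness argument; your proof does not need this decomposition.
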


The effect of polarization on the $L^p$-norms of a function and its gradient and the Gagliardo seminorm (see Proposition~\ref{norm-properties}) prove the Faber-Krahn type inequality for $\Lambda _{\mathcal{L}^s_p}(\cdot )$. To establish the strict Faber-Krahn type inequality~\eqref{StrictFKtype}, we demonstrate a version of the strong comparison principle involving a Sobolev function and its polarization (see Proposition~\ref{SCP}). When $\Omega \neq P_H(\Omega )\neq \sigma _H(\Omega )$ and equality holds in the Faber-Krahn type inequality~\eqref{StrictFKtype}, we show a contradiction to this strong comparison principle in $\Omega \cap H$ with the help of two sets $A_H(\Omega )$ and $B_H(\Omega )$ with nonempty interiors (see Proposition~\ref{Propo-ball}-(iv)).   

As an application of Theorem~\ref{Thm-unified}, we get the strict monotonicity of the least eigenvalue $\Lambda _{\mathcal{L}^s_p}(\cdot )$ over annular domains. Precisely, we show that $\Lambda _{\mathcal{L}^s_p}\left(B_R(0)\setminus \overline{B}_r(s e_1)\right)$ strictly decreases for $0<s<R-r$ and $0<r<R<\infty $ (see Theorem~\ref{strictmonotonicity}).  In the subsequent theorem, using the variational characterization of $\Lambda _{\mathcal{L}^s_p}(\cdot )$ and the P\'olya-Szeg\"o inequalities under polarization (Proposition~\ref{norm-properties}), we derive the classical Faber-Krahn inequality for $\mathcal{L}^s_p$. Further, using Theorem~\ref{strictmonotonicity}, we prove the rigidity property of the balls in the classical Faber-Krahn inequality for $\mathcal{L}^s_p$.
\begin{theorem}\label{FK inequality}
Let $p \in (1, \infty)$ and $s \in (0,1)$. Let $\Omega \subset \mathbb{R}^d$ be a bounded domain and $\Omega ^*$ be the open ball centered at the origin in $\mathbb{R}^d$ such that $|\Omega ^*|=|\Omega |$. Then
\begin{equation}\label{classical FK}
    \Lambda _{\mathcal{L}^s_p}(\Omega ^*)\leq \Lambda _{\mathcal{L}^s_p}(\Omega ).
\end{equation}
Further, assume that $\Omega $ is of class $\mathcal{C}^{1, \beta}$ for some $\beta \in (0,1)$. Then, the equality occurs in~\eqref{classical FK} if and only if $\Omega =\Omega ^*$ (up to a translation).
\end{theorem}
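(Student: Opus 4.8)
The plan is to handle the inequality~\eqref{classical FK} and the equality case separately, working throughout at the level of the variational characterization~\eqref{var-char-Ineq-Mixed}. \emph{For the inequality}, let $u\in W^{1,p}_0(\Omega)\setminus\{0\}$ be a non-negative first eigenfunction of $\mathcal{L}^s_p$ on $\Omega$ normalized by $\|u\|_{L^p(\Omega)}=1$, so that $\|\nabla u\|_{L^p(\Omega)}^p+[u]_{s,p}^p=\Lambda_{\mathcal{L}^s_p}(\Omega)$. By~\cite{V2006,BrockSolynin2000} there is a sequence of polarizers $(H_k)_{k\ge1}$ such that the iterated polarizations $u_k:=P_{H_k}\cdots P_{H_1}u$ converge in $L^p(\mathbb{R}^d)$ to the Schwarz symmetrization $u^*$ of $u$. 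Iterating Proposition~\ref{norm-properties} yields $\|u_k\|_{L^p(\mathbb{R}^d)}=1$, $\|\nabla u_k\|_{L^p(\mathbb{R}^d)}\le\|\nabla u\|_{L^p(\Omega)}$ and $[u_k]_{s,p}\le[u]_{s,p}$ for every $k$, so $(u_k)$ is bounded in $W^{1,p}(\mathbb{R}^d)$ with uniformly bounded Gagliardo seminorm. Passing to a subsequence I would have $u_k\rightharpoonup u^*$ in $W^{1,p}(\mathbb{R}^d)$ and $u_k\to u^*$ a.e.; since $\{u^*>0\}$ is the open ball centred at the origin with measure $|\{u>0\}|\le|\Omega|=|\Omega^*|$, it follows that $u^*\in W^{1,p}_0(\Omega^*)$ with $\|u^*\|_{L^p(\Omega^*)}=1$. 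Weak lower semicontinuity of $v\mapsto\|\nabla v\|_{L^p}^p$ together with Fatou's lemma on the Gagliardo double integral give
\[
\|\nabla u^*\|_{L^p(\Omega^*)}^p+[u^*]_{s,p}^p\le\liminf_{k\to\infty}\bigl(\|\nabla u_k\|_{L^p(\mathbb{R}^d)}^p+[u_k]_{s,p}^p\bigr)\le\|\nabla u\|_{L^p(\Omega)}^p+[u]_{s,p}^p,
\]
and inserting $u^*$ into~\eqref{var-char-Ineq-Mixed} for $\Omega^*$ proves~\eqref{classical FK}. This argument uses no connectedness of the underlying set, so~\eqref{classical FK} in fact holds for every bounded open subset of $\mathbb{R}^d$, a fact I will use below.

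\emph{For the equality case}, one implication is immediate: if $\Omega$ is a translate of $\Omega^*$ then $\Lambda_{\mathcal{L}^s_p}(\Omega)=\Lambda_{\mathcal{L}^s_p}(\Omega^*)$ by translation invariance of $\Lambda_{\mathcal{L}^s_p}(\cdot)$. For the converse, assume $\Omega$ is of class $\mathcal{C}^{1,\beta}$ and $\Lambda_{\mathcal{L}^s_p}(\Omega^*)=\Lambda_{\mathcal{L}^s_p}(\Omega)$, and suppose for contradiction that $\Omega$ is not a ball. The crucial geometric input is that there is then a polarizer $H$ with $\Omega\neq P_H(\Omega)\neq\sigma_H(\Omega)$: otherwise $P_H(\Omega)\in\{\Omega,\sigma_H(\Omega)\}$ for every $H\in\mathcal{H}$, and through the sets $A_H(\Omega)$, $B_H(\Omega)$ of Proposition~\ref{Propo-ball} and the characterization of balls by polarization-stability, $\Omega$ would be forced to be a ball. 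Fixing such an $H$, Theorem~\ref{Thm-unified} applies to $\Omega$ and gives $\Lambda_{\mathcal{L}^s_p}(P_H(\Omega))<\Lambda_{\mathcal{L}^s_p}(\Omega)$. On the other hand $P_H(\Omega)$ is a bounded open set with $|P_H(\Omega)|=|\Omega|$, hence $(P_H(\Omega))^*=\Omega^*$, and the inequality~\eqref{classical FK} (already established for bounded open sets) applied to $P_H(\Omega)$ gives $\Lambda_{\mathcal{L}^s_p}(\Omega^*)\le\Lambda_{\mathcal{L}^s_p}(P_H(\Omega))$. Chaining these with the equality hypothesis,
\[
\Lambda_{\mathcal{L}^s_p}(\Omega^*)\le\Lambda_{\mathcal{L}^s_p}(P_H(\Omega))<\Lambda_{\mathcal{L}^s_p}(\Omega)=\Lambda_{\mathcal{L}^s_p}(\Omega^*),
\]
a contradiction. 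Therefore $\Omega$ is a ball, and since $|\Omega|=|\Omega^*|$ it is a translate of $\Omega^*$.

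I expect the main obstacle to be the geometric claim invoked in the equality case — that a $\mathcal{C}^{1,\beta}$ domain which is fixed (up to reflection) by every polarization must be a ball — since this is exactly where one needs the nonemptiness of the interiors of $A_H(\Omega)$ and $B_H(\Omega)$ from Proposition~\ref{Propo-ball}, and it is also the point at which, following the route indicated in the introduction, the strict monotonicity of $\Lambda_{\mathcal{L}^s_p}(\cdot)$ over annular domains (Theorem~\ref{strictmonotonicity}) can be brought in to exclude the remaining near-degenerate configurations. By comparison, the limit passage for the two energies along the polarization sequence, the membership $u^*\in W^{1,p}_0(\Omega^*)$, and the translation invariance of $\Lambda_{\mathcal{L}^s_p}(\cdot)$ are routine.
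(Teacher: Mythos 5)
Your argument for the inequality~\eqref{classical FK} is essentially the paper's: polarize the first eigenfunction along the approximating sequence of \cite{V2006}, use Proposition~\ref{norm-properties} to bound the energies, and pass to the weak limit $u^*$, which lands in $W^{1,p}_0(\Omega^*)$; the paper works in $W^{1,p}_0(B_R(0))$ for a fixed large ball to have compact embeddings available, but this is a cosmetic difference since $L^p$-convergence is already supplied by Proposition~\ref{Sequence_pol}.

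For the equality case your route is genuinely different in its organization, and in fact slightly more streamlined if it can be completed. The paper first restricts to polarizers $H\in\mathcal{H}_{x_0}$ through the incenter $x_0$, derives from the equality chain that $P_H(\Omega)\in\{\Omega,\sigma_H(\Omega)\}$ for all such $H$, concludes that $\Omega$ is radial about $x_0$ (hence a ball or a concentric annulus, by connectedness), and then eliminates the annulus by quoting Theorem~\ref{strictmonotonicity}. You instead compress the argument into one geometric lemma --- \emph{if $\Omega$ is not a ball then some $H$ satisfies $\Omega\neq P_H(\Omega)\neq\sigma_H(\Omega)$} --- and then a single application of Theorem~\ref{Thm-unified} and~\eqref{classical FK} to produce the strict contradiction chain. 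If this lemma is proved, you need Theorem~\ref{strictmonotonicity} nowhere, which is tidier.

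The gap is exactly the lemma, which you assert but do not prove. The non-radial case follows the paper's own computation via Proposition~\ref{Propo-ball}(iv), choosing $x_0\in\Omega$ maximizing $d(\cdot,\partial\Omega)$ to guarantee $\overline{\Omega}^{\mathsf c}\cap\sigma_H(\Omega)$ meets $\partial H$. But the concentric annulus $\Omega_0=B_R(0)\setminus\overline{B}_r(0)$ is radial about its center, yet is not a ball, so for it you must exhibit an $H$ \emph{not} through the center. The needed computation is exactly the one the paper cites for Theorem~\ref{strictmonotonicity}: with $H_a=\{x_1<a\}$ and $0<a<\tfrac{R-r}{2}$ one has $P_{H_a}(\Omega_0)=B_R(0)\setminus\overline{B}_r(2ae_1)$, which differs from both $\Omega_0$ and $\sigma_{H_a}(\Omega_0)=B_R(2ae_1)\setminus\overline{B}_r(2ae_1)$. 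Your closing paragraph conflates this gap with "bringing in Theorem~\ref{strictmonotonicity}," but note that if you do fall back on Theorem~\ref{strictmonotonicity} you have changed the logical structure back to the paper's (radial reduction first, then annulus case), while if you prove the geometric lemma directly you can run the clean single-contradiction argument you sketch. Either way the annulus computation is the missing content; once supplied, your proof is correct.
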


The rest of the paper is organized as follows. Section~\ref{Section~2} discusses the regularity results and the strong maximum and strong comparison principles for the mixed operator $\mathcal{L}^s_p$. The strict Faber-Krahn inequality under polarization and the strict version of the classical Faber-Krahn inequality are proved in the final section.

\section{Regularity, Strong Maximum, and Strong Comparison Principles}\label{Section~2}

We begin the section by investigating the regularity of the eigenfunction for~\eqref{Eigen_eqn}. Subsequently, we prove a strong maximum principle for the mixed operator $\mathcal{L}_p^s$. Finally, we establish a variant of the strong comparison principle involving a function and its polarization. 

\subsection*{Regularity of the eigenfunctions} In this subsection, we obtain the $\mathcal{C}^{0,\alpha }$-regularity of the eigenfunctions of~\eqref{Eigen_eqn}.
We first prove that eigenfunctions lie in $L^{\infty}(\Omega)$. 
In~\cite[Theorem 4.1]{Biagi2024}, the authors obtained a similar result which says that if $u_0 \in W_0^{1,p}(\Omega)$ weakly satisfies $\mathcal{L}_p^s u =f(x,u)$ in $\Omega$, then $u_0 \in L^{\infty}(\Omega)$, where $f$ is a Carath\'eodory function satisfying certain growth assumptions. 

\begin{proposition}\label{regular}
    Let $p \in (1, \infty)$ and $s \in (0,1)$. Let $\Om \subset \R^d$ be a bounded open set, and $u \in W_0^{1,p}(\Omega)$ be a non-negative weak solution of~\eqref{Eigen_eqn}. Then $u \in L^{\infty}(\mathbb{R}^d )$. Further, if $\Om$ is of class $\mathcal{C}^{1,\beta}$ for some $\beta \in (0,1)$, then $u \in \C^{0,\al}(\mathbb{R}^d )$ for every $\al \in (0, 1)$.
\end{proposition}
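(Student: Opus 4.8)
The plan is to establish the $L^{\infty}$ bound first (for an arbitrary bounded open $\Omega$), and then to upgrade to H\"older regularity once $\partial\Omega$ is assumed $\mathcal{C}^{1,\beta}$; in both steps the nonlocal term $(-\Delta_p)^s u$ is handled by showing it is either favourably signed or of lower order. For the boundedness, fix $k>0$ and test \eqref{weak} with $\phi=(u-k)_+=\max\{u-k,0\}$; this is admissible because $\{(u-k)_+>0\}=\{u>k\}\subset\Omega$ since $u\equiv 0$ outside $\Omega$. The key point is that the Gagliardo bilinear form evaluated on the pair $\big(u,(u-k)_+\big)$ is nonnegative: as both $t\mapsto(t-k)_+$ and $t\mapsto|t|^{p-2}t$ are non-decreasing, the integrand
\[
\frac{|u(x)-u(y)|^{p-2}\big(u(x)-u(y)\big)\big((u-k)_+(x)-(u-k)_+(y)\big)}{|x-y|^{d+ps}}
\]
is pointwise nonnegative. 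Discarding it and using $u\geq 0$ (and $\Lambda\geq 0$, as testing \eqref{weak} with $u$ reveals) we get
\[
\int_{\Omega}|\Gr(u-k)_+|^{p}\dx \;\leq\; \Lambda\int_{\{u>k\}}u^{p-1}(u-k)\dx \;\leq\; C\int_{\{u>k\}}\big((u-k)^{p}+k^{p-1}(u-k)\big)\dx ,
\]
and from here the classical De Giorgi--Stampacchia truncation scheme -- combined, when $d/p$ is large, with a finite Moser-type bootstrap of the integrability exponent that terminates because $f(x,t)=\Lambda|t|^{p-2}t$ has growth exponent $p-1$ (subcritical) -- gives $\|u\|_{L^{\infty}(\Omega)}\leq C(d,p,s,\Omega,\Lambda)\|u\|_{L^{p}(\Omega)}$. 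Alternatively, one may directly invoke \cite[Theorem~4.1]{Biagi2024} after checking that $f(x,t)=\Lambda|t|^{p-2}t$ is a Carath\'eodory function meeting its growth hypotheses. Since $u$ vanishes outside $\Omega$, this yields $u\in L^{\infty}(\mathbb{R}^{d})$.

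Assume now $\partial\Omega\in\mathcal{C}^{1,\beta}$. Then $\Lambda|u|^{p-2}u\in L^{\infty}(\Omega)$, so $u$ is a bounded weak solution of $\mathcal{L}^{s}_{p}u=\Lambda|u|^{p-2}u$ with bounded right-hand side, and the De Giorgi--Nash--Moser theory for the mixed operator $\mathcal{L}^{s}_{p}$ with bounded data gives $u\in\mathcal{C}^{0,\gamma_{0}}_{\mathrm{loc}}(\Omega)$ for some $\gamma_{0}=\gamma_{0}(d,p,s)\in(0,1)$. To reach an arbitrary exponent, on a ball $B\Subset\Omega$ rewrite the equation in purely local form,
\[
-\Delta_{p}u=g_{B}\quad\text{in }B,\qquad g_{B}(x):=\Lambda|u(x)|^{p-2}u(x)-(-\Delta_{p})^{s}u(x),
\]
and split $(-\Delta_{p})^{s}u(x)$ into the integral over $\mathbb{R}^{d}\setminus B$, bounded by $(2\|u\|_{\infty})^{p-1}\int_{\mathbb{R}^{d}\setminus B}|x-y|^{-d-ps}\dy$ uniformly for $x$ in a smaller concentric ball, and the principal-value integral over $B$, estimated by symmetrizing the kernel in $x\pm z$ and using the (global) H\"older modulus of $u$. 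Once the current H\"older exponent is large enough this makes $g_{B}\in L^{\infty}(B)$, whereupon the $\mathcal{C}^{1,\gamma}$ regularity theory for $\Delta_{p}$ with bounded right-hand side (DiBenedetto and Tolksdorf in the interior, Lieberman up to the $\mathcal{C}^{1,\beta}$ boundary with the zero Dirichlet datum) upgrades $u$; iterating finitely many times gives $u\in\mathcal{C}^{1,\gamma}_{\mathrm{loc}}(\Omega)$. Near $\partial\Omega$ one uses that a $\mathcal{C}^{1,\beta}$ domain satisfies a uniform exterior ball condition: comparing $u$ with standard $\Delta_{p}$-barriers -- whose nonlocal contribution is bounded, hence absorbed -- through the weak comparison principle for $\mathcal{L}^{s}_{p}$ gives $0\leq u(x)\leq C\,\mathrm{dist}(x,\partial\Omega)$, and combining this decay with the interior estimates by the usual boundary-H\"older iteration (or invoking directly the boundary regularity for $\mathcal{L}^{s}_{p}$ with bounded data on $\mathcal{C}^{1,\beta}$ domains) yields $u\in\mathcal{C}^{0,\alpha}(\overline{\Omega})$ for every $\alpha\in(0,1)$. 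Since $u=0$ on $\mathbb{R}^{d}\setminus\Omega$ and $u|_{\partial\Omega}=0$, the zero extension lies in $\mathcal{C}^{0,\alpha}(\mathbb{R}^{d})$, as claimed.

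I expect the main obstacle to be the control of the nonlocal term in the exponent-improvement step: when only a small H\"older exponent is available, the principal-value part of $(-\Delta_{p})^{s}u$ is merely conditionally convergent, so the cancellation in the symmetrized kernel must be exploited with care, and in the degenerate range $p\neq 2$ the non-smoothness of $t\mapsto|t|^{p-2}t$ at $t=0$ at interior critical points of $u$ must be accommodated. The cleanest way around this is to rely on the sharp higher-regularity theory already available for mixed local--nonlocal $p$-Laplace operators with bounded data rather than on a naive Schauder bootstrap; with that input the proof reduces to assembling the steps above.
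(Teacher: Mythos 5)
For the $L^{\infty}$ bound you take a valid but different route from the paper. You test with $(u-k)_+$ and discard the nonlocal term, using that the Gagliardo bilinear form evaluated on the pair $\bigl(u,(u-k)_+\bigr)$ is pointwise nonnegative, and then run a De Giorgi--Stampacchia truncation on the remaining local inequality (with a short bootstrap to beat the growth exponent $p-1$). The paper instead tests with $\phi=u_M^{\sigma}$ where $u_M=\min\{u,M\}$ and $\sigma>1$, keeps the nonlocal term on the left by lower-bounding it through the pointwise inequality of \cite[Lemma~C.2]{BrLiPa}, and then runs the Moser iteration of \cite[Theorem~3.3]{BrLiPa}; it treats the ranges $d>p$, $ps<d\leq p$, $d=ps$ and $d<ps$ separately. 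Both arguments work; the Moser scheme handles the low-dimensional cases more uniformly and produces an explicit $L^{\infty}$ bound, while your truncation argument is more elementary. Your alternative of citing \cite[Theorem~4.1]{Biagi2024} is also acceptable and is in fact mentioned in the paper.

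For the $\mathcal{C}^{0,\alpha}$ part, the hands-on bootstrap you sketch has a genuine gap, which you yourself flag. To push $g_B=\Lambda|u|^{p-2}u-(-\Delta_p)^s u$ into $L^{\infty}_{\mathrm{loc}}$ you need the H\"older exponent of $u$ already available to exceed a threshold depending on $s$ and $p$; but the interior exponent $\gamma_0$ produced by a De Giorgi--Nash--Moser step for $\mathcal{L}^s_p$ is not at your disposal to choose, so there is no guarantee the iteration even begins when $ps$ is close to $1$, and in the singular range $p<2$ the map $t\mapsto|t|^{p-2}t$ is not Lipschitz, so the symmetrized-kernel cancellation does not by itself tame the principal value. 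The barrier argument near $\partial\Omega$ also requires the barrier to be an actual $\mathcal{L}^s_p$-supersolution; quoting a $\Delta_p$-barrier and asserting that its ``nonlocal contribution is bounded, hence absorbed'' is not sufficient without verifying the sign of that contribution. The paper sidesteps all of this: once $u\in L^{\infty}(\mathbb{R}^d)$, it directly invokes the boundary regularity theorem of De~Filippis and Mingione \cite[Theorem~1]{DM2022} for mixed local-nonlocal problems with bounded data on $\mathcal{C}^{1,\beta}$ domains, which yields $u\in\mathcal{C}^{0,\alpha}(\mathbb{R}^d)$ for every $\alpha\in(0,1)$ in a single step. Your closing remark --- that the cleanest route is to rely on the existing higher-regularity theory --- is exactly the paper's choice, and should be the main argument rather than the fallback.
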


\begin{proof}
To prove $u \in L^{\infty}(\mathbb{R}^d)$, we consider the case when $d>p$. For $ps < d \leq p$, our proof follows by employing analogous arguments with appropriate adjustments in the Sobolev embeddings of $W^{1,p}_0( \Omega)$. Set $M \geq 0$ and $\sigma > 1$. Consider $u_M:=\min\{u,M\}$ and $\phi:=u_M^{\sigma}$. Then $u_M, \phi \in L^{\infty}(\Omega ) \cap W_0^{1,p}(\Omega)$. We choose $\phi $ as a test function in the weak formulation~\eqref{weak}. Now, we calculate 
    \begin{align*}
        \abs{\Gr u}^{p-2} \Gr u \cdot \Gr ( u_M^{\sigma} ) = \sigma u_M^{\sigma -1 } \abs{\Gr u_M}^p = \sigma \left( \frac{p}{\sigma + p -1} \right)^p \left|  \Gr u_M^{\frac{\sigma + p -1}{p}} \right|^p. 
    \end{align*}
Observe that $u_M^{\frac{\sigma + p -1}{p}} \in W_0^{1, p }(\Omega)$, since $\sigma > 1$. Therefore, the Sobolev embedding $W_0^{1,p}(\Omega) \hookrightarrow L^{p^*}(\Omega )$ yields
\begin{equation}\label{lower_estimate 1}
    \begin{split}
        \int_{\Omega } \abs{ \Gr u(x)}^{p-2} \Gr u(x) \cdot \Gr \phi(x) \, \dx & =  \sigma \left( \frac{p}{\sigma + p -1} \right)^p \int_{\Omega } \left|  \Gr u_M(x) ^{\frac{\sigma + p -1}{p}} \right|^p \, \dx \\
        & \ge C(d,p) \sigma \left( \frac{p}{\sigma + p -1} \right)^p \left( \int_{ \Omega } u_M(x)^{\frac{p^*(\sigma + p -1)}{p}} \, \dx \right)^{ \frac{p}{p^*}},
    \end{split}
\end{equation}
where $C(d,p)$ is the embedding constant. Using the following inequality from~\cite[Lemma C.2]{BrLiPa}: 
    \begin{equation*}
        |a-b|^{p-2}(a-b)(a_M^\sigma -b_M^\sigma) \geq \frac{\sigma p^p}{(\sigma +p-1)^p}\left|a_M^{\frac{\sigma +p-1}{p}}-b_M^{\frac{\sigma +p-1}{p}}\right|^p,
    \end{equation*}
    (where $a, b\geq 0$, $a_M=\min\{a,M\}$, and $b_M=\min\{b,M\}$), together with the embedding $\wps \hookrightarrow  L^{p^*_s}(\mathbb{R}^d )$, we estimate the second integral on the left-hand side of~\eqref{weak} as follows:
  \begin{equation}\label{lower_estimate 2}
      \begin{split}
        \int _{\mathbb{R}^d} \int _{\mathbb{R}^d} &\frac{|u(x)-u(y)|^{p-2}(u(x)-u(y))}{\abs{x-y}^{d+ps}} (\phi(x)-\phi(y))\, \dy \dx \\
        & \geq \frac{\sigma p^p}{(\sigma + p-1)^p} \int_{\mathbb{R}^d } \int_{\mathbb{R}^d }  \frac{\left|u_M(x)^{\frac{\sigma+p-1}{p}}-u_M(y)^{\frac{\sigma + p - 1}{p}}\right|^{p}}{\abs{x-y}^{d+ps}}  \, \dx \\ 
        & \geq \frac{C(d,s,p)\sigma p^p}{(\sigma+p-1)^p} \left(\int_{\mathbb{R}^d } u_M(x)^{\frac{p_s^*(\sigma + p -1)}{p}} \,\dx\right)^{\frac{p}{p^*_s}}.
      \end{split}
  \end{equation}
Combining~\eqref{weak},~\eqref{lower_estimate 1}, and~\eqref{lower_estimate 2}, we obtain 
\begin{equation*}
    \begin{split}
        &  \frac{C(d,p)\sigma p^p}{(\sigma+p-1)^p} \left( \int_{ \Omega } u_M(x)^{\frac{p^*(\sigma + p -1)}{p}} \, \dx \right)^{ \frac{p}{p^*}} \\
        & + \frac{C(d,s,p)\sigma p^p}{(\sigma+p-1)^p} \left(\int_{\mathbb{R}^d } u_M(x)^{\frac{p_s^*(\sigma + p -1)}{p}}\,\dx\right)^{\frac{p}{p^*_s}} \le \Lambda \int_{\Om} u(x)^{\sigma + p-1} \, \dx.
    \end{split}
\end{equation*}
Since $M$ is arbitrary, the dominated convergence theorem yields
\begin{equation}\label{bound1}
    \begin{split}
        & \frac{C(d,p)\sigma p^p}{(\sigma+p-1)^p} \left( \int_{ \Omega } u(x)^{\frac{p^*(\sigma + p -1)}{p}} \, \dx \right)^{ \frac{p}{p^*}} \\
        & + \frac{C(d,s,p)\sigma p^p}{(\sigma+p-1)^p} \left(\int_{\mathbb{R}^d } u(x)^{\frac{p_s^*(\sigma + p -1)}{p}}\,\dx\right)^{\frac{p}{p^*_s}} \le \Lambda \int_{\Om} u(x)^{\sigma + p-1} \, \dx.
    \end{split}
\end{equation}
By observing that the first quantity of the above inequality is non-negative, we hence get 
\begin{align*}
    \frac{C(d,s,p)\sigma p^p}{(\sigma+p-1)^p} 
        \left(\int_{\mathbb{R}^d } u(x)^{\frac{p_s^*(\sigma + p -1)}{p}} \,\dx\right)^{\frac{p}{p^*_s}} \le \Lambda \int_{\Om} u(x)^{\sigma + p-1} \, \dx.
\end{align*}
The rest of the proof follows the identical arguments presented in~\cite[Theorem 3.3]{BrLiPa}. In the case $d = ps$, using the following Sobolev embedding
\begin{equation*}
\Theta_{s_1,p}(\Omega)
\left(\int_{\mathbb{R}^d }\left(u_M(x)^{\frac{\sigma+p-1}{p}}\right)^{2 p}\,\dx\right)^{\frac{1}{2}} \le \int _{\mathbb{R}^d} \int _{\mathbb{R}^d} \frac{\left|u_M(x)^{\frac{\sigma+p-1}{p}}-u_M(y)^{\frac{\sigma + p - 1}{p}}\right|^{p}}{\abs{x-y}^{d+ps}} \, \dy \dx,
\end{equation*}
where $
\Theta_{s,p}(\Omega):=\min\limits_{u\in W_0^{s,p}(\Omega)}\left\{[u]^p_{s,p}:\norm{u}_{L^{2p}(\Omega)}=1\right\},
$ 
we similarly get 
\begin{align}\label{form 1.1}
    \frac{ \Theta_{s,p}(\Omega) \sigma p^p}{(\sigma + p-1)^p} \left(\int_{\mathbb{R}^d } u(x)^{2 (\sigma + p -1)}\,\dx\right)^{\frac{1}{2}} \le \Lambda \int_{\mathbb{R}^d } u(x)^{\sigma + p-1} \, \dx,
\end{align}
and then rest the proof follows using~\cite[Theorem 3.3]{BrLiPa}. For $d<ps$, $u \in L^{\infty}(\mathbb{R}^d )$ follows from Morrey's inequality. Now, using the boundedness of $u$ and applying the regularity result~\cite[Theorem 1]{DM2022} to the equation~\eqref{Eigen_eqn} over $\mathcal{C}^{1,\beta}$-class domain $\Omega$, we conclude that $u \in \C^{0, \al}(\mathbb{R}^d )$ for every $\al \in (0,1)$.
\end{proof}

\subsection*{Strong maximum principle}
Now we discuss the strong maximum principle for $\mathcal{L}_p^s$. A strong maximum principle for the equation $\mathcal{L}_p^s u = V(x) \abs{u}^{p-2} u$, where $V \in L_{loc}^1(\Omega)$ with $V \le 0$ has been recently established in~\cite[Theorem 1.2]{Shang-Zhang23}. Also, we refer to~\cite[Theorem 3.1]{Biagi2024} for related strong maximum principle results. Our proof follows a similar procedure as in~\cite[Theorem 1.2]{Shang-Zhang23}. We require the following logarithmic energy estimate to demonstrate the strong maximum principle for~\eqref{Eigen_eqn}. 
\begin{lemma}[Logarithmic energy estimate]
    Let $p \in (1, \infty)$ and $s \in (0,1)$. Let $u \in W^{1,p}_0( \Omega)$ and $u \ge 0$ a.e. in $\Omega$. Assume that $u$ satisfies the following inequality: 
    \begin{align}\label{weak formulation}
        \int_{\Omega } \abs{ \Gr u}^{p-2} \Gr u \cdot \Gr \phi \dx & + \int _{\mathbb{R}^d} \int _{\mathbb{R}^d} \frac{|u(x)-u(y)|^{p-2}(u(x)-u(y))}{\abs{x-y}^{d+ps}} (\phi(x)-\phi(y))\, \dy \dx  \ge 0,
    \end{align}
    for every $\phi \in W^{1,p}_0(\Omega)$ with $\phi \ge 0$. Let $R>0$ and $x_0 \in \Omega$ be such that $B_R(x_0) \subset \Omega$. Then the following estimate holds for every $r> 0$ with $B_r(x_0) \subset B_{\frac{R}{2}}(x_0)$ and for every $\delta>0$: 
     \begin{multline}\label{log estimate}
         \int_{B_r(x_0)} \abs{ \Gr \log(u(x)+ \delta)}^p \, \dx +  \int _{B_r(x_0)} \int _{B_r(x_0)} \left|\log \left( \frac{u(x) + \delta}{u(y) + \delta} \right) \right|^p \, \frac{\dy \dx}{\abs{x-y}^{d+ps}} \\
             \leq C r^d \left( r^{-p} + r^{-ps} \right),
     \end{multline}
    where $C=C(d,p,s)>0$ is a universal constant.
\end{lemma}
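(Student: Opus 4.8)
The plan is to test the differential inequality \eqref{weak formulation} with a suitable function of the form $\phi = (u+\delta)^{1-p}\eta^p$, where $\eta$ is a standard cutoff function supported in $B_{R/2}(x_0)$ with $\eta \equiv 1$ on $B_r(x_0)$, $0\le\eta\le 1$, and $|\nabla\eta|\le C/r$ (the existence of such $\eta$ requires $B_r(x_0)\subset B_{R/2}(x_0)$, which is our hypothesis). Since $u\ge 0$ and $\delta>0$, this $\phi$ is a legitimate nonnegative test function in $W^{1,p}_0(\Omega)$. First I would handle the local (gradient) term: expanding $\nabla\phi = (1-p)(u+\delta)^{-p}\eta^p\nabla u + p(u+\delta)^{1-p}\eta^{p-1}\nabla\eta$ and pairing with $|\nabla u|^{p-2}\nabla u$, the leading term is $-(p-1)(u+\delta)^{-p}\eta^p|\nabla u|^p = -(p-1)\eta^p|\nabla\log(u+\delta)|^p$; the cross term is absorbed via Young's inequality at the cost of a term controlled by $\int \eta^{p-\text{something}}|\nabla\eta|^p \le Cr^{-p}|B_R(x_0)| \le Cr^{d-p}$ (using $r<R/2$, hence $r^d\lesssim R^d$; actually one keeps it as $r^d r^{-p}$ since the cutoff lives where $\eta\ne 0$, i.e. in $B_{R/2}$, and $|B_{R/2}|\le Cr^d$ only if $R\le 2r$ — so more carefully one uses $|B_{R/2}(x_0)| = C R^d$ and notes the estimate is stated with constant $C r^d(r^{-p}+r^{-ps})$, so one actually needs $R\sim r$; the cleanest route is to take the cutoff supported in $B_{2r}(x_0)\subset B_{R}(x_0)$ instead, which is permissible). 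This yields $\int_{B_r(x_0)}|\nabla\log(u+\delta)|^p\,dx \le C r^{d}(r^{-p}) + (\text{nonlocal contribution})$.

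For the nonlocal term the key tool is the pointwise inequality from \cite[Lemma C.2]{BrLiPa} (or the discrete analogue used for logarithmic estimates in the fractional $p$-Laplacian literature, e.g. Di Castro–Kuusi–Palatucci): for $a,b\ge 0$ and $\delta>0$, with the same $\phi$-increments, one has a bound of the form
\begin{equation*}
|a-b|^{p-2}(a-b)\big((a+\delta)^{1-p}\eta(x)^p - (b+\delta)^{1-p}\eta(y)^p\big) \le -c\,\eta(x)^p\Big|\log\frac{a+\delta}{b+\delta}\Big|^p + C\,|\eta(x)-\eta(y)|^p,
\end{equation*}
valid (say) when $a\ge b$, after symmetrizing. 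Integrating this against the kernel $|x-y|^{-d-ps}$ over $\mathbb{R}^d\times\mathbb{R}^d$ and splitting the error integral into the region $B_{2r}(x_0)\times B_{2r}(x_0)$ (where $|\eta(x)-\eta(y)|\le C r^{-1}|x-y|$, giving $\iint_{|x-y|\le 4r}|x-y|^{p}\cdot r^{-p}|x-y|^{-d-ps}\,dx\,dy \le C r^{d-ps}$) and the region where at least one of $x,y$ lies outside $B_{2r}(x_0)$ (where $\eta$ vanishes on one side, and one integrates the tail $\int_{|x-y|\ge r}|x-y|^{-d-ps}\,dy \le C r^{-ps}$ against the measure of the support, again $\le Cr^{d-ps}$). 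Collecting everything gives the nonlocal half of \eqref{log estimate} with constant $C r^d(r^{-p}+r^{-ps})$, and feeding the nonlocal error back into the local estimate closes the argument.

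The main obstacle I anticipate is the careful bookkeeping in the nonlocal term: establishing the right pointwise algebraic inequality involving $\log\frac{a+\delta}{b+\delta}$ with a clean sign and an error term that is genuinely of order $|\eta(x)-\eta(y)|^p$ (not worse), and then rigorously estimating the "tail" contributions when exactly one of the two points leaves the support of $\eta$ — here one must use that $\log((u+\delta)/(u(y)+\delta))$ need not be small when $u(y)$ is large, so one cannot naively bound the integrand; instead one exploits that $\eta(y)=0$ there and that $|a-b|^{p-2}(a-b)\big((a+\delta)^{1-p}-0\big)\le \eta(x)^p \cdot (\text{bounded})$ because $(a+\delta)^{1-p}|a-b|^{p-1}\le 1$ when $b=0$ and more generally is controlled. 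A secondary technical point is justifying that $\phi=(u+\delta)^{1-p}\eta^p\in W^{1,p}_0(\Omega)$ and that all integrals converge, which follows since $\delta>0$ keeps $(u+\delta)^{1-p}$ bounded and $u\in W^{1,p}_0(\Omega)$. None of these steps is deep; the proof is essentially the standard Caccioppoli-type/logarithmic estimate adapted to the mixed operator, and I would organize it exactly as above, quoting \cite[Lemma C.2]{BrLiPa} for the nonlocal pointwise inequality and using only elementary convexity and Young's inequality for the local part.
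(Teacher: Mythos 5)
Your plan matches the paper's proof essentially step for step: test \eqref{weak formulation} with $\phi=(u+\delta)^{1-p}\psi^p$ for a cutoff $\psi\in C_c^\infty(B_{3r/2}(x_0))$ with $\psi\equiv 1$ on $B_r(x_0)$ and $|\nabla\psi|\lesssim r^{-1}$, absorb the cross term in the local part via Young's inequality to get $-c\int\psi^p|\nabla\log(u+\delta)|^p+Cr^{d-p}$, and bound the nonlocal part by a logarithmic Caccioppoli-type inequality that produces $-c\iint\psi^p|\log\frac{u(x)+\delta}{u(y)+\delta}|^p|x-y|^{-d-ps}+Cr^{d-ps}$. The one correction: the pointwise inequality you first attribute to \cite[Lemma~C.2]{BrLiPa} is actually the one the paper uses for the $L^\infty$ Moser iteration in Proposition~\ref{regular}; for the nonlocal log estimate the paper invokes \cite[Lemma~1.3]{DKP2016} directly (as you suggest as an alternative), which already packages the near/far kernel splitting and the $r^{d-ps}$ tail bound you sketch.
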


\begin{proof}
Consider a cut-off function $\psi \in C_0^{\infty}(B_{\frac{3r}{2}})$ such that $\psi \equiv 1$ in $B_r(x_0)$ and $0 \le \psi \le 1$. Hence $\abs{ \Gr \psi} \le \frac{C}{r}$ in $B_{\frac{3r}{2}}(x_0)$ for some $C>0$. For $\delta>0$, we consider the function $\phi(x) = (u(x)+\delta)^{1-p} \psi(x)^p$ where $x \in \Omega$. We calculate 
\begin{align*}
    \Gr \phi = (1-p) \frac{\Gr u}{(u+ \delta)^p} \psi^p + p \frac{ \Gr \psi}{(u + \delta)^{p-1}} \psi^{p-1}.
\end{align*}
The above identity verifies that $ \phi \in W^{1,p}_0( \Omega)$. By choosing $\phi$ as a test function in~\eqref{weak formulation} we estimate the first integral of~\eqref{weak formulation} as follows:
\begin{align}\label{log-estimate 1}
        \int_{\Omega } \abs{ \Gr u}^{p-2} \Gr u \cdot \Gr \left( \frac{\psi^p}{(u+ \delta)^{p-1}} \right) 
        &= (1-p) \int_{ \Omega } \frac{\abs{\Gr u}^p}{ (u+ \delta)^p} \psi^p + p \int_{ \Omega} \frac{\abs{ \Gr u}^{p-2} \Gr u \cdot \Gr \psi }{ (u + \delta)^{p-1}} \psi^{p-1}.
\end{align}
Let $\varepsilon > 0$ be fixed. Applying Young's inequality, we get 
\begin{align*}
    \frac{\abs{\Gr u}^{p-1}}{(u + \delta)^{p-1}} \psi^{p-1} \abs{ \Gr \psi} \le \varepsilon \frac{\abs{\Gr u}^p}{ (u+ \delta)^p} \psi^p + C(\varepsilon) \abs{\Gr \psi}^p.
 \end{align*}
We choose $\varepsilon < \frac{p-1}{2}$, and also observe that $\abs{\Gr \log(u+\delta)}^p = \frac{\abs{ \Gr u}^p}{(u+ \delta)^p}$. Therefore,~\eqref{log-estimate 1} yields 
\begin{equation}\label{log-estimate 2}
    \begin{split}
        \int_{\Omega } \abs{ \Gr u}^{p-2} \Gr u \cdot \Gr \left( \frac{\psi^p}{(u+ \delta)^{p-1}} \right) \dx \leq \frac{1-p}{2} \int_{ \Omega } \abs{\Gr \log(u+\delta)}^p \dx + C(p) \int_{B_{\frac{3r}{2}}(x_0)} \abs{\Gr \psi}^p\dx.
    \end{split}
\end{equation}
Further, 
\begin{align*}
    \int_{B_{\frac{3r}{2}}(x_0)} \abs{\Gr \psi}^p \dx \le \frac{C^p}{r^p} |B_{\frac{3r}{2}}| = C(d) r^{d-p},
\end{align*}
for some $C(d)>0$.
Therefore, from~\eqref{log-estimate 2} we obtain 
\begin{align*}
    \int_{\Omega } \abs{ \Gr u}^{p-2} \Gr u \cdot \Gr \left( \frac{\psi^p}{(u+ \delta)^{p-1}} \right) \dx \leq -C(p) \int_{ \Omega } \abs{\Gr \log(u+\delta)}^p \dx + C(d) r^{d-p}.
\end{align*}
By~\cite[Lemma 1.3]{DKP2016}, we have the following estimate for the second integral of~\eqref{weak formulation}:
\begin{align}\label{log-estimate 3}
        &\int _{\mathbb{R}^d} \int _{\mathbb{R}^d} \frac{|u(x)-u(y)|^{p-2}(u(x)-u(y))}{\abs{x-y}^{d+ps}}  \left(  \frac{\psi(x)^p}{(u(x) + \delta)^{p-1}} -\frac{\psi(y)^p}{(u(y) + \delta)^{p-1}} \right) \, \dy \dx \no \\
        & \le C(d,p,s) \left( \; -\int _{B_{2r}(x_0)} \int _{B_{2r}(x_0)} \left|\log \left( \frac{u(x) + \delta}{u(y) + \delta} \right) \right|^p \psi(y)^p \, \frac{\dy \dx}{\abs{x-y}^{d+ps}} + r^{d-ps} \right),
\end{align}
where $C(d,p,s) >0$. For the inequality~\eqref{log-estimate 3}, we have used $u \ge 0$ a.e. in $\mathbb{R}^d $. Now by combining~\eqref{weak formulation},~\eqref{log-estimate 2}, and~\eqref{log-estimate 3}, and using the fact $\psi \equiv 1$ on $B_r(x_0)$, we obtain the desired estimate. 
\end{proof}

\begin{proposition}\label{SMP}
    Let $p \in (1, \infty)$, $s \in (0,1)$, and $\Omega \subset \mathbb{R}^d $ be a bounded domain.  Let $u \in W^{1,p}_0( \Omega)$ and $u \ge 0$ a.e. in $\Omega$. Assume that $u$ satisfies the following inequality: 
    \begin{align*}
        \int_{\Omega } \abs{ \Gr u(x)}^{p-2} \Gr u(x) \cdot \Gr \phi(x) \, \dx & + \int _{\mathbb{R}^d} \int _{\mathbb{R}^d} \frac{|u(x)-u(y)|^{p-2}(u(x)-u(y))}{\abs{x-y}^{d+ps}} (\phi(x)-\phi(y))\, \dy \dx  \ge 0
    \end{align*}
    for every $\phi \in W^{1,p}_0( \Omega)$ with $\phi \ge 0$. Then either $u \equiv 0$ or $u>0$ a.e. in $\Omega$. 
\end{proposition}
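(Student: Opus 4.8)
The plan is to argue by contradiction, combining the logarithmic energy estimate just established with the Poincar\'e--Wirtinger inequality on balls, and then to globalize the conclusion using the connectedness of $\Omega$. Assume $u$ is not a.e.\ equal to $0$ in $\Omega$; the goal is to deduce that $u>0$ a.e.\ in $\Omega$.

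\emph{Step 1 (local dichotomy).} First I would show: for every ball $B_r(x_0)$ with $B_{2r}(x_0)\subset\Omega$, either $u>0$ a.e.\ in $B_r(x_0)$ or $u=0$ a.e.\ in $B_r(x_0)$. Fix such a ball and suppose, for contradiction, that $E_0:=\{u=0\}\cap B_r(x_0)$ and $E_1:=\{u\ge\varepsilon_0\}\cap B_r(x_0)$ both have positive measure for some $\varepsilon_0>0$ (such an $\varepsilon_0$ exists whenever $|\{u>0\}\cap B_r(x_0)|>0$, since $\{u>0\}=\bigcup_{n}\{u\ge 1/n\}$). For $\delta>0$ set $v_\delta:=\log(u+\delta)$; since $u\ge 0$ one has $\nabla v_\delta=\nabla u/(u+\delta)\in L^p(B_r(x_0))$ and $\log\delta\le v_\delta\le u+\delta$, so $v_\delta\in W^{1,p}(B_r(x_0))$. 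Applying the logarithmic energy estimate with $R=2r$ gives $\int_{B_r(x_0)}\abs{\nabla v_\delta}^p\,\dx\le Cr^d(r^{-p}+r^{-ps})$ with $C$ \emph{independent of} $\delta$, whence by Poincar\'e--Wirtinger the quantity $\norm{v_\delta-(v_\delta)_{B_r(x_0)}}_{L^p(B_r(x_0))}$ is bounded uniformly in $\delta$. On the other hand $v_\delta\equiv\log\delta$ on $E_0$ and $v_\delta\ge\log\varepsilon_0$ on $E_1$; distinguishing the two cases according to whether the mean $(v_\delta)_{B_r(x_0)}$ is at least, or less than, $\tfrac12(\log\delta+\log\varepsilon_0)$, one finds in either case that
\[
\norm{v_\delta-(v_\delta)_{B_r(x_0)}}_{L^p(B_r(x_0))}^p\ \ge\ \min\{\abs{E_0},\abs{E_1}\}\,\Big(\tfrac12\log(\varepsilon_0/\delta)\Big)^p\ \longrightarrow\ \infty \quad\text{as } \delta\to 0^+,
\]
contradicting the uniform bound. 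This proves the dichotomy.

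\emph{Step 2 (globalization).} Let $U$ (resp.\ $V$) be the set of $x\in\Omega$ for which there is a ball around $x$ on which $u>0$ a.e.\ (resp.\ $u=0$ a.e.). Both sets are open and disjoint, and applying the dichotomy to a small ball around each point of $\Omega$ shows $\Omega=U\cup V$. Since $\Omega$ is connected and $u$ is not a.e.\ zero (so that $V\neq\Omega$), we conclude $\Omega=U$, that is, $u>0$ a.e.\ in $\Omega$.

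I expect the verification that $v_\delta\in W^{1,p}(B_r(x_0))$ and the bookkeeping in the two cases of Step 1 to be routine; since the heavy lifting has already been done in the logarithmic energy estimate, the only remaining structural point is that this estimate is purely local (valid only on compactly contained balls), so the passage from the local dichotomy to the global alternative genuinely relies on the connectedness of $\Omega$. (Alternatively, one could run the same oscillation argument using the nonlocal term in the logarithmic estimate together with the fractional Poincar\'e inequality; the local part already suffices.)
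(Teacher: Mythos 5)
Your proposal is correct, and it arrives at the same conclusion by a genuinely different route from the paper, even though both proofs rest on the same logarithmic energy estimate proved just beforehand. The paper uses the \emph{nonlocal} (double-integral) part of that estimate: taking $F_\delta=\log(1+u/\delta)$, it observes that $F_\delta$ vanishes on the zero set $\mathcal{A}$, writes $|F_\delta(x)|^p = |F_\delta(x)-F_\delta(y)|^p$ for $y\in\mathcal{A}$, and integrates in $y\in\mathcal{A}$ and $x\in B_{r/2}(x_j)$ to bound $\int_{B_{r/2}(x_j)}|F_\delta|^p$ uniformly in $\delta$ by the Gagliardo term of the log estimate; letting $\delta\to 0$ forces $u\equiv 0$ on the ball, and the global conclusion is obtained by covering a compact connected subset of $\Omega$ by a chain of overlapping balls and then exhausting $\Omega$. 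You instead use only the \emph{local} ($L^p$-gradient) part of the log estimate together with the Poincar\'e--Wirtinger inequality, turning the dichotomy into an oscillation bound between the sets $E_0=\{u=0\}\cap B_r$ and $E_1=\{u\ge\varepsilon_0\}\cap B_r$, and you globalize by the cleaner open-partition argument $\Omega = U\sqcup V$. (Since $F_\delta$ and your $v_\delta=\log(u+\delta)$ differ by the constant $-\log\delta$, the two devices are functionally equivalent.)

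Both arguments are valid. Yours has two advantages: the Poincar\'e--Wirtinger route makes the local part of the log estimate do the work, so the same argument would transfer verbatim to the purely local $p$-Laplacian once a logarithmic Caccioppoli estimate is in hand; and the connectedness step via disjoint open sets $U,V$ is more economical than the paper's ball-chaining. The paper's route, on the other hand, hews closely to the nonlocal proof in Brasco--Franzina that it cites, and exploits the Gagliardo term in a way that makes the zero set $\mathcal{A}$ enter the estimate directly with a constant depending on $|\mathcal{A}|$. One cosmetic remark on your Step 1: invoking the lemma ``with $R=2r$'' forces the boundary case $B_r\subset B_{R/2}$ with equality; it is cleaner to assume $\overline{B_{2r}(x_0)}\subset\Omega$ and then apply the lemma with some $R$ slightly larger than $2r$.
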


\begin{proof}
    We adapt the technique used in~\cite[Theorem A.1]{BF2014}. Let $K \subset \subset \Omega$ be any compact connected set. We first show that either $u \equiv 0$ or $u>0$ a.e. in $K$. Notice that $K \subset \{ x \in \Omega : \text{dist}(x, \partial \Omega) > 2r \}$ for some $r>0$. Since $K$ is compact, we choose $x_1,x_2,\dots, x_k$ in $\Omega$ such that $K \subset \cup_{i=1}^k B_{\frac{r}{2}}(x_i)$, and $\abs{B_{\frac{r}{2}}(x_i) \cap B_{\frac{r}{2}}(x_{i+1})} >0$ for each $i$, where $\abs{\cdot}$ denotes the Lebesgue measure. Suppose $u \equiv 0$ on a subset of $K$ with a positive measure. Then there exists $j \in \{1, \dots, k \}$ such that $\mathcal{A} = \{ x \in B_{\frac{r}{2}}(x_j) : u(x) = 0 \}$ has 
    a positive measure, i.e., $|\mathcal{A}| >0$. We define 
    \begin{align*}
        F_{\delta}(x) = \log \left( 1 + \frac{u(x)}{\delta} \right) \text{ for } x \in B_{\frac{r}{2}}(x_j).
    \end{align*}
    Clearly, $F_{\delta} \equiv 0$ on $\mathcal{A}$. Take $x \in B_{\frac{r}{2}}(x_j)$ and $y \in \mathcal{A}$ with $y \neq x$. Then 
    \begin{align*}
        \abs{F_{\delta}(x)}^p = \frac{ \abs {F_{\delta}(x) - F_{\delta}(y)}^p }{ \abs{x-y}^{d + ps} } \abs{x-y}^{d + ps}.
     \end{align*}
     Integrating the above identity over $\mathcal{A}$ we get
  \begin{align*}
      \abs{ \mathcal{A} } \abs{F_{\delta}(x)}^p \le \max_{x,y \in B_{\frac{r}{2}}(x_j)}{ \abs{x-y}^{d + ps} } \int_{ B_{\frac{r}{2}}(x_j)  } \left|\log \left( \frac{u(x) + \delta}{u(y) + \delta} \right) \right|^p \, \frac{\dy}{\abs{x-y}^{d+ps}}.
  \end{align*}
Further, the integration over $ B_{\frac{r}{2}}(x_j) $ yields
 \begin{align*}
     \int_{ B_{\frac{r}{2}}(x_j) } \abs{F_{\delta}(x)}^p \, \dx \le \frac{r^{d+ps}}{\abs{\mathcal{A}}} \int _{B_{\frac{r}{2}}(x_j)} \int _{B_{\frac{r}{2}}(x_j)} \left|\log \left( \frac{u(x) + \delta}{u(y) + \delta} \right) \right|^p \, \frac{\dy \dx}{\abs{x-y}^{d+ps}}.
 \end{align*}
     Now we use the above inequality and the logarithmic energy estimate~\eqref{log estimate} over $B_{\frac{r}{2}}(x_j)$ to get 
     \begin{equation*}
         \begin{split}
             &\int_{  B_{\frac{r}{2}}(x_j) } \abs{ \Gr \log(u(x)+ \delta)}^p \, \dx + \int_{ B_{\frac{r}{2}}(x_j) } \abs{F_{\delta}(x)}^p \, \dx \\
             &\le \int_{  B_{\frac{r}{2}}(x_j) } \abs{ \Gr \log(u(x)+ \delta)}^p \, \dx + \frac{r^{d+ps}}{\abs{\mathcal{A}}} \int _{B_{\frac{r}{2}}(x_j)} \int _{B_{\frac{r}{2}}(x_j)}
             \left|\log \left( \frac{u(x) + \delta}{u(y) + \delta} \right) \right|^p \, \frac{\dy \dx}{\abs{x-y}^{d+ps}} \\
             &\le C(d,p,s) r^d \left( r^{-p} + r^{-ps} \right) \max \left\{ 1, \frac{r^{d+ps}}{\abs{\mathcal{A}}} \right\}.
         \end{split}
     \end{equation*}
Thus, for every $ \delta > 0$, we have
    \begin{align*}
         \int_{ B_{\frac{r}{2}}(x_j) } \left|  \log \left( 1 + \frac{u(x)}{\delta} \right)  \right|^p \, \dx \le C(d,p,s) r^d \left( r^{-p} + r^{-ps} \right) \max \left\{ 1, \frac{r^{d+ps}}{\abs{\mathcal{A}}} \right\}.
    \end{align*}
    Letting $\delta \rightarrow 0$, from the above inequality we infer that $u \equiv 0$ a.e. in $B_{\frac{r}{2}}(x_j)$. Moreover, $u \equiv 0$ a.e. on a subset of positive measure in $B_{\frac{r}{2}}(x_{j+1})$ since $\abs{B_{\frac{r}{2}}(x_j) \cap B_{\frac{r}{2}}(x_{j+1})} >0$. Consequently, repeating the same arguments, we obtain $u \equiv 0$ a.e. in $B_{\frac{r}{2}}(x_{j+1})$, and then $u \equiv 0$ a.e. in $B_{\frac{r}{2}}(x_{i})$ for every $i = 1, \dots, k$. Thus $u \equiv 0$ a.e. in $K$. Hence, for every relatively compact set $K$ in $\Omega$, either $u \equiv 0$ or $u>0$ holds a.e. in $K$. Since $\Om$ is connected, there exists a sequence $(K_n)_{n \in \mathbb{N}}$ of compact sets such that $\abs{ \Omega \setminus K_n} \rightarrow 0$ as $n \rightarrow \infty$. Therefore, either $u \equiv 0$ or $u>0$ also holds a.e. in $\Omega$. This completes the proof.
  \end{proof}
  
\subsection*{Strong comparison principle}
 In this subsection, we discuss a strong comparison principle for the mixed local-nonlocal operator $\mathcal{L}^s_p$ involving a function and its polarization. For that, we recall the definition of the polarization of a function. 
 
\begin{definition}
    Let $H \in \mathcal{H}$ be a polarizer. For $u:\mathbb{R}^d\ra \mathbb{R}$, the polarization $P_H(u):\mathbb{R}^d \ra \mathbb{R}$ with respect to $H$ is defined as
\begin{equation*}
    P_H u(x) = \left\{
    \begin{aligned}
        &\max\{u(x), u(\sigma _H(x))\}, \quad \text{ for } x \in H,\\
        &\min\{u(x), u(\sigma _H(x))\}, \quad \text{ for } x \in \mathbb{R}^d\setminus H.
    \end{aligned}
    \right.
\end{equation*}
For $u:\Omega \ra \mathbb{R}$, let $\widetilde{u}$ be the zero extension of $u$ to $\mathbb{R}^d$. The polarization $P_H u:P_H(\Omega ) \ra \mathbb{R}$ is defined as the restriction of $P_H \widetilde{u}$ to $P_H(\Omega )$.
 \end{definition}
The polarization for functions on $\mathbb{R}^d$ is introduced by Ahlfors~\cite{Ahlfors73} (when $d=2$) and Baernstein-Taylor~\cite{Baernstein94} (when $d\geq 2)$. For further reading on polarizations and their applications, we refer the reader to~\cite{Anoop-Ash-Kesh, NirjanUjjalGhosh21, Brock04, Solynin12, Weth2010}. Next, in the spirit of~\cite[Definition~2.1]{BDVV22}, we define a weak solution of the following inequality involving $\mathcal{L}^s_p$: 
\begin{equation}\label{eqn-L_harmonic}
    \mathcal{L}^s_p u_1 - \mathcal{L}^s_p u_2 \geq 0,
\end{equation}
where $u_1, u_2 \in W^{1,p}(\mathbb{R}^d)$. We say a pair of functions $(u_1, u_2)$ solves~\eqref{eqn-L_harmonic} weakly in $\Omega $, if for any $\phi \in \mathcal{C}_c^\infty (\Omega )$ with $\phi \geq 0$,
\begin{multline*}
        \int_\Omega \left( |\nabla u_1|^{p-2}\nabla u_1  - |\nabla u_2|^{p-2}\nabla u_2 \right) \cdot \nabla \phi \dx 
        \\ + \int _{\mathbb{R}^d} \int _{\mathbb{R}^d} \frac{|u_1(x)-u_1(y)|^{p-2}(u_1(x)-u_1(y))}{|x-y|^{d+sp}} (\phi (x) -\phi (y)) \, \dy\dx 
        \\ -\int _{\mathbb{R}^d}\int _{\mathbb{R}^d} \frac{|u_2(x)-u_2(y)|^{p-2}(u_2(x)-u_2(y))}{|x-y|^{d+sp}} (\phi (x) -\phi (y)) \, \dy\dx \geq 0.
\end{multline*}
Now, we are ready to state a strong comparison type principle. For a set $A$, $|A|$ denotes the measure of $A$.
\begin{proposition}\label{SCP}
    Let $H\in \mathcal{H}$, and $\Omega \subset \mathbb{R}^d$ be an open set. Let $p\in (1, \infty )$, $s \in (0,1)$, and $u\in W_0^{1,p}(\Omega)$ be non-negative. Assume that $P_H u$ and $u$ satisfy the following equation weakly:
    \begin{equation}\label{SCP-eqn-Mixed}
        \mathcal{L}_p^s P_H u - \mathcal{L}_p^s u \geq 0 \mbox{ in } \Omega \cap H.
    \end{equation}
    Consider the following sets in $\Om \cap H$:
    \begin{align*}
      \mathcal{A} := \left\{ x \in  \Omega \cap H : P_H u(x) = u(x)  \right\}; \; \mathcal{B} := \left\{ x \in  \Omega \cap H : P_H u(x) > u(x)   \right\}.
    \end{align*}
    If $|\mathcal{B}|>0$, then $\mathcal{A}$ has an empty interior.
\end{proposition}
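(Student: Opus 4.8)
We reason by contraposition: assuming $\mathcal{A}$ has nonempty interior, we derive $|\mathcal{B}|=0$. Write $v:=P_H u$ for the polarization of the zero extension of $u$; since $u\ge 0$, $v$ is supported in the bounded set $P_H(\Omega)$ and $v\in W^{1,p}(\mathbb{R}^d)$ (polarization preserves $W^{1,p}(\mathbb{R}^d)$, cf.\ Proposition~\ref{norm-properties}). Set $w:=v-u$. Straight from the definition of polarization, $w\ge 0$ on $H$, $w\le 0$ on $\mathbb{R}^d\setminus H$, $\mathcal{A}=\{w=0\}\cap\Omega\cap H$, $\mathcal{B}=\{w>0\}\cap\Omega\cap H$, and — the identity that will drive the sign computation — for a.e.\ $y\in H$ the unordered pair $\{v(y),v(\sigma_H y)\}$ equals $\{u(y),u(\sigma_H y)\}$, polarization merely moving the larger of the two values to the $H$-side.

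First I would fix an open ball $U\subseteq\interior\mathcal{A}$ and a test function $0\le\phi\in\mathcal{C}_c^{\infty}(U)$ with $\phi\not\equiv 0$, which is admissible in~\eqref{SCP-eqn-Mixed} since $U\subseteq\Omega\cap H$. Because $w\equiv 0$ on the open set $U$ we get $\nabla v=\nabla u$ a.e.\ on $U$, so the local term in the weak inequality vanishes. For the nonlocal term, put $J_p(t):=|t|^{p-2}t$; the numerator $J_p(v(x)-v(y))-J_p(u(x)-u(y))$ is antisymmetric in $x\leftrightarrow y$, so (by Fubini) the nonlocal term equals $2\int_U\phi(x)\,I(x)\dx$ with
\[
    I(x):=\int_{\mathbb{R}^d}\frac{J_p(v(x)-v(y))-J_p(u(x)-u(y))}{|x-y|^{d+ps}}\dy.
\]
The point that needs care is that $I(x)$ is finite for each $x\in U$: the integrand vanishes for $y$ near $x$ (since $v\equiv u$ on $U$) and for $y$ outside $\supp u\cup\supp v$ (there $u(y)=v(y)=0$ and $v(x)=u(x)$), while on the remaining bounded region the kernel is bounded and $|u|^{p-1},|v|^{p-1}\in L^1_{\mathrm{loc}}$. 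Thus~\eqref{SCP-eqn-Mixed} forces $\int_U\phi(x)\,I(x)\dx\ge 0$.

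It then remains to show $I(x)<0$ for every $x\in U$, which contradicts the last inequality and completes the proof. Fix $x\in U$, so $v(x)=u(x)=:\alpha$. Splitting the integral defining $I(x)$ into $\int_H$ and $\int_{\mathbb{R}^d\setminus H}$ and applying the measure-preserving change of variables $y\mapsto\sigma_H y$ to the second piece yields
\[
    I(x)=\int_H\left(\frac{J_p(\alpha-v(y))-J_p(\alpha-u(y))}{|x-y|^{d+ps}}+\frac{J_p(\alpha-v(\sigma_H y))-J_p(\alpha-u(\sigma_H y))}{|x-\sigma_H y|^{d+ps}}\right)\dy.
\]
For a.e.\ $y\in H$ the two numerators sum to zero — it is the sum of $J_p(\alpha-\cdot)$ over the same unordered pair — so the bracket reduces to $\big(|x-y|^{-(d+ps)}-|x-\sigma_H y|^{-(d+ps)}\big)\big(J_p(\alpha-v(y))-J_p(\alpha-u(y))\big)$. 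The first factor is strictly positive for a.e.\ $y\in H$ by the elementary inequality $|x-y|<|x-\sigma_H y|$, valid whenever $x$ and $y$ both lie in the open half-space $H$; the second factor is $\le 0$ because $v(y)\ge u(y)$ and $J_p$ is strictly increasing, and it is strictly negative precisely on $\{y\in H: u(\sigma_H y)>u(y)\}=\{w>0\}\cap H$. Since $\mathcal{B}\subseteq\{w>0\}\cap H$ and $|\mathcal{B}|>0$, the bracket is strictly negative on a set of positive measure, hence $I(x)<0$. This contradiction shows $|\mathcal{B}|=0$; equivalently, $|\mathcal{B}|>0$ forces $\mathcal{A}$ to have empty interior.

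The step I expect to be the main obstacle is the reduction of the nonlocal term to $2\int_U\phi\,I\dx$ together with the a.e.\ finiteness of $I$ on $U$ (hence the legitimacy of Fubini): this is exactly where one must exploit both the cancellation $v\equiv u$ on $U$ and the compact supports of $u$ and $P_H u$. The remaining manipulations — the reflection substitution, the cancellation of the two numerators, and the sign of $J_p(\alpha-v(y))-J_p(\alpha-u(y))$ — are routine once the polarization identities and the strict monotonicity of $t\mapsto|t|^{p-2}t$ are in hand.
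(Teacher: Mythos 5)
Your proof is correct, and it takes a genuinely different (and arguably cleaner) route than the paper's. The paper keeps the test function factor $\phi(x)-\phi(y)$ intact, splits the inner $y$-integral over $H$ and $H^{\mathsf c}$ with the reflection, and arrives at a three-term decomposition $I = I_1 - I_2 + I_3$; it then shows $I_2 \ge 0$ and $I_3 = 0$ separately (using that $v\equiv u$ on $\supp\phi$ and on $K_2$), so that the weak inequality forces $I_1 \ge 0$, and finally argues $I_1 < 0$ for the contradiction. You instead exploit the antisymmetry of $F(x,y) := J_p(v(x)-v(y)) - J_p(u(x)-u(y))$ under $x\leftrightarrow y$ to collapse the nonlocal term to $2\int_U \phi(x)\,I(x)\,dx$, and then one reflection substitution plus the observation that $J_p(\alpha - v(y))+J_p(\alpha - v(\sigma_H y)) = J_p(\alpha - u(y))+J_p(\alpha - u(\sigma_H y))$ reduces $I(x)$ to a single product of a strictly positive kernel difference and a nonpositive monotonicity term, which is strictly negative on a set of positive measure. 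The decisive ingredients — $|x-y| < |x-\sigma_H y|$ for $x,y\in H$, strict monotonicity of $J_p$, and $v\ge u$ on $H$ — are the same in both arguments, but your symmetrization eliminates the $\phi(y)$ terms automatically, making the auxiliary terms $I_2, I_3$ unnecessary. The step you correctly flag as needing care — splitting the absolutely convergent double integral into pieces with $\phi(x)$ and $\phi(y)$ alone — is legitimate here precisely because $F(x,y)\equiv 0$ for $x,y\in U$ (so the singularity of the kernel is never seen) and $F(x,\cdot)$ is supported in the bounded set $\supp u \cup \supp v$ at positive distance from $\supp\phi$, making each piece absolutely integrable; it is worth stating this estimate in full, since the naive split would fail for a generic pair of functions.
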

\begin{proof}
    Firstly, we denote $v=P_Hu$ in $P_H(\Omega )$, $w= v-u$ in $\Omega \cap H$, and $G(t)=|t|^{p-2}t$ with $G'(t)=(p-1)|t|^{p-2}\geq 0$ for $t\in \mathbb{R}$. On the contrary, we assume that $\mathcal{A}$ has a nonempty interior. Consider a test function $\phi \in \mathcal{C}_{c}^\infty (\mathcal{A})$ with $\phi > 0$ on $K := \text{supp}(\phi)$ where $|K|>0$. From~\eqref{SCP-eqn-Mixed} we have 
    \begin{multline}\label{scp-1}
        \int_\Omega \left(|\nabla v|^{p-2}\nabla v -|\nabla u|^{p-2}\nabla u\right)\cdot \nabla \phi (x) \dx  \\+\int_{\mathbb{R}^d } \int_{\mathbb{R}^d } \frac{G(v(x)-v(y))-G(u(x)-u(y))}{\abs{x-y}^{d+ps}} (\phi(x)-\phi(y))\, \dy \dx \geq 0.
    \end{multline}
    Since $\nabla \phi $ is supported in $\mathcal{A}$ and $\nabla v= \nabla u$ in $\mathcal{A}$, the first integral in~\eqref{scp-1} is zero, and hence
    \begin{equation}\label{scp-2}
        I:= \int_{\mathbb{R}^d } \int_{\mathbb{R}^d } \frac{G(v(x)-v(y))-G(u(x)-u(y))}{\abs{x-y}^{d+ps}} (\phi(x)-\phi(y))\, \dy \dx \geq 0.
    \end{equation}
    Using $\mathbb{R}^d=H\cup H^\mathsf{c}$ and the reflection $\overline{y}=\sigma _H(y)$, we write the inner integral of~\eqref{scp-2} as an integral over $H$ as below:
    \begin{align*}
        \ii{\mathbb{R}^d} &\frac{G(v(x)-v(y))-G(u(x)-u(y))}{\abs{x-y}^{d+ps}} (\phi(x)-\phi(y))\, \dy\\
        &= \left(\ii{H}+\ii{H^\mathsf{c}}\right) \frac{G(v(x)-v(y))-G(u(x)-u(y))}{\abs{x-y}^{d+ps}} (\phi(x)-\phi(y))\, \dy\\
        &=\ii{H} \left(\frac{1}{\abs{x-y}^{d+ps}}-\frac{1}{\abs{x-\overline{y}}^{d+ps}}\right) \big( G(v(x)-v(y))-G(u(x)-u(y)) \big) \phi(x)\, \dy\\
        &~~~-\ii{H} \frac{G(v(x)-v(y))-G(u(x)-u(y))}{\abs{x-y}^{d+ps}}\phi(y)\, \dy\\
        &~~~+\ii{H} \frac{G(v(x)-v(y))-G(u(x)-u(y))+G(v(x)-v(\overline{y}))-G(u(x)-u(\overline{y}))}{\abs{x-\overline{y}}^{d+ps}} \phi(x)\, \dy,
    \end{align*}
    where we use $\phi (\overline{y})=0$ for $y\in H$. We express the integral $I=I_1-I_2+I_3$ given by
    \begin{align*}
        I_1&= \int_{H} \int_{H} \left(\frac{1}{\abs{x-y}^{d+ps}}-\frac{1}{\abs{x-\overline{y}}^{d+ps}}\right) \big( G(v(x)-v(y))-G(u(x)-u(y)) \big) \phi(x)\, \dy \dx,\\
        I_2&= \int_{\mathbb{R}^d } \int_{H} \frac{G(v(x)-v(y))-G(u(x)-u(y))}{\abs{x-y}^{d+ps}}\phi(y)\, \dy \dx,\\
        I_3&= \int_{H} \int_{H} \frac{G(v(x)-v(y))-G(u(x)-u(y))+G(v(x)-v(\overline{y}))-G(u(x)-u(\overline{y}))}{\abs{x-\overline{y}}^{d+ps}} \phi(x)\, \dy \dx,
    \end{align*}
    where in $I_1$ and $I_3$ we use that the support of $\phi $ lies inside $H$. Using $\mathbb{R}^d=H\cup H^\mathsf{c}$,  $\overline{x}=\sigma _H(x)$, and $v(y)=u(y)$ for $y\in K={\rm supp} (\phi )$, we rewrite $I_2$ as 
    \begin{multline}\label{I_2 first}
        I_2=\int _{H} \int _{K} \frac{G(v(x)-u(y))-G(u(x)-u(y))}{\abs{x-y}^{d+sp}}\phi(y)\, \dy \dx \\
        + \int _{H} \int _{K} \frac{G(v(\overline{x})-u(y))-G(u(\overline{x})-u(y))}{\abs{\overline{x}-y}^{d+sp}}\phi(y)\, \dy \dx. 
    \end{multline}
    By the definition, either $v(x)=u(x)$ or $v(x)=u(\overline{x})$ for $x\in \mathbb{R}^d$. Consider the sets $K_1=\{x\in H : v(x)=u(\overline{x})\}$ and $K_2=\{x\in H : v(x)=u(x)\}$. Observe that
    \begin{align*}
        &v(x) = u(x) \text{ for some } x \in H \Longrightarrow u(\overline{x}) = v(\overline{x}), \text{ and } \\
        &v(x) = u(\overline{x}) \text{ for some } x \in H \Longrightarrow u(x) = v(\overline{x}).
    \end{align*}
Hence, both the terms in~\eqref{I_2 first} involving $G(\cdot)$ are zero on the set $K_2$, and from~\eqref{I_2 first} we get
    \begin{align*}
        I_2 &= \int _{K_1}\int _K \Bigl(G(v(x)-u(y))-G(u(x)-u(y))\Bigr)\left(\frac{1}{|x-y|^{d+sp}}-\frac{1}{|\overline{x}-y|^{d+sp}}\right)\phi(y) \, \dy\dx.
    \end{align*}
    Observe that $|x-y| < |\overline{x} - y|$ for any $x,y \in H$, and  $\phi \geq 0$ in $H$. Moreover, since $G(\cdot)$ is increasing and $v(x) \ge u(x)$ for $ x \in H$, we get $G(v(x)-u(y))-G(u(x)-u(y)) \geq 0$ for $x\in H$ and $y\in K$. Thus, we conclude that $I_2\geq 0.$ 
    Similarly, by replacing $v(x)=u(x)$ on $K$ we rewrite $I_3$ as 
    \begin{align*}
        I_3 &= \int_{K} \int _{H} \frac{G(u(x)-v(y))-G(u(x)-u(y))+G(u(x)-v(\overline{y}))-G(u(x)-u(\overline{y}))}{\abs{x-\overline{y}}^{d+sp}} \phi(x)\, \dy \dx.
    \end{align*}
    The term in the numerator involving $G(\cdot )$ is zero for $x\in K$ and $y \in H=K_1 \cup K_2$. Therefore, we get $I_3=0$. Thus,~\eqref{scp-2} implies that 
    \begin{align}\label{inequality 1}
        I_1= I+I_2 \geq 0.
    \end{align}
     Now we show that $I_1 < 0$, which will contradict~\eqref{inequality 1}. We have $v(x)-u(x)=0$ for $x \in K$. Hence for a.e. $x \in K$ and $y \in H$, we get  
    \begin{align*}
        (v(x)-v(y))-(u(x)-u(y)) = (v(x)-u(x)) - (v(y)-u(y)) \left\{\begin{array}{ll} 
             < 0, & \text {when } y \in \mathcal{B}; \\ 
             \le 0, & \text{when } y \in H \setminus \mathcal{B}. \\
             \end{array} \right.
    \end{align*}
    The monotonicity of $G(\cdot )$ gives:  for a.e. $x \in K$ and $y \in H$,
    \begin{align*}
        G(v(x)-v(y)) - G(u(x)-u(y)) \left\{\begin{array}{ll} 
             < 0, & \text {when } y \in \mathcal{B}; \\ 
             \le 0, & \text{when } y \in H \setminus \mathcal{B}. \\
             \end{array} \right.
    \end{align*}
    Further, $\phi > 0$ on $K$ and $|x-y| < |x-\overline{y}|$ for any $x,y \in H$. Therefore, we write $I_1$ as
    \begin{align*}
        &\left[ \;\int _{K} \int _{\mathcal{B}} + \int _{K} \int _{H \setminus \mathcal{B} }\right] \left(\frac{1}{\abs{x-y}^{d+sp}}-\frac{1}{\abs{x-\overline{y}}^{d+sp}}\right) \big[G(v(x)-v(y))-G(u(x)-u(y)) \big] \phi(x)\, \dy \dx \\
        &:= I_{1,1} + I_{1,2} < 0,
    \end{align*}
where the strict negativity follows from the facts $I_{1,1} < 0$ (as $|\mathcal{B}|>0$ and $\abs{K}>0$), and $I_{1,2} \le 0$. Therefore, we conclude that the set $\mathcal{A}$ has an empty interior. 
\end{proof}
\begin{remark}\label{rmk1}
    Our method to prove the above strong comparison principle can not be adapted to the purely local $p$-Laplace operator because the first integral in~\eqref{scp-1} associated with the $p$-Laplace operator over the set $\Omega \cap H$ is zero.   
\end{remark}
\section{Proof of main theorems}\label{Section 3} 
This section establishes a strict Faber-Krahn type inequality under polarization for $\Lambda _{\mathcal{L}^s_p}(\cdot )$. Then, using this strict Faber-Krahn type inequality, we obtain the classical Faber-Krahn inequality for $\Lambda _{\mathcal{L}^s_p}(\cdot )$. In the following proposition, we list some important properties of the polarization of sets, see~\cite[Proposition~2.2]{AK-NB2023Mono} or~\cite[Section~2]{Anoop-Ashok23}.

\begin{proposition}\label{Propo-ball}
    Let $H\in \mathcal{H}$ and $\Omega \subseteq \mathbb{R}^d.$ The following hold:
    \begin{enumerate}[label=\rm (\roman*)]
        \item $P_H(\sigma _H(\Omega ))=P_H(\Omega )$ and $P_{\sigma_H(H)}(\Omega ) = \sigma _H(P_H(\Omega ))$;
        \item $P_H(\Omega )\cap H$ is connected, if $\Omega $ is connected;
        \item $P_H(\Omega )=\Omega $ if and only if $\sigma _H(\Omega )\cap H\subseteq \Omega $;
        \item Further, assume that $\Omega $ is open. Then 
        \begin{enumerate}
            \item[\rm (a)] $A_H(\Omega ):= \sigma _H(\Omega ) \cap \Omega ^\mathsf{c} \cap H$ has a non-empty interior if and only if $P_H(\Omega )\neq \Omega $.
            \item[\rm (b)] $B_H(\Omega ):= \Omega \cap \sigma _H(\Omega ^\mathsf{c})\cap H$ has a non-empty interior if and only if $P_H(\Omega )\neq \sigma _H(\Omega )$.
        \end{enumerate}
    \end{enumerate}
\end{proposition}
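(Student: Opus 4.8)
The plan is to treat the four items in the order (i), (iii), (iv), (ii): item (iii) carries the real content, (iv) is almost a formal consequence of it, and (ii) is the only part needing a genuine point-set argument. Throughout I would use repeatedly that $\sigma_H$ is an isometric involution commuting with $\cup$, $\cap$ and complementation, that $\sigma_H(H)=\{x\in\mathbb{R}^d:x\cdot h>a\}$ is the \emph{open} complementary half-space (so $\mathbb{R}^d\setminus\overline H=\sigma_H(H)$), and that $\sigma_{\sigma_H(H)}=\sigma_H$ since $\partial(\sigma_H(H))=\partial H$. For (i): substituting $\sigma_H(\Omega)$ into the definition of $P_H$ and using $\sigma_H\circ\sigma_H=\operatorname{id}$ together with commutativity of $\cup,\cap$ gives $P_H(\sigma_H(\Omega))=P_H(\Omega)$ at once; for the second identity I would apply $\sigma_H$ to each of the two pieces $(\Omega\cup\sigma_H(\Omega))\cap H$ and $\Omega\cap\sigma_H(\Omega)$ of $P_H(\Omega)$ and recognise the union of the images as $P_{\sigma_H(H)}(\Omega)$.

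The key computation behind (iii) is to evaluate the symmetric difference $P_H(\Omega)\triangle\Omega$. A short case split, according to whether a point lies in $H$ or not, yields, for an arbitrary $\Omega\subseteq\mathbb{R}^d$,
\[
P_H(\Omega)\setminus\Omega=\sigma_H(\Omega)\cap H\cap\Omega^{\mathsf c}=A_H(\Omega),\qquad \Omega\setminus P_H(\Omega)=\sigma_H\!\big(A_H(\Omega)\big).
\]
Hence $P_H(\Omega)=\Omega$ if and only if $A_H(\Omega)=\varnothing$, i.e.\ if and only if $\sigma_H(\Omega)\cap H\subseteq\Omega$; this is (iii). Applying the same identity with $\sigma_H(\Omega)$ in place of $\Omega$ and invoking (i) ($P_H(\sigma_H(\Omega))=P_H(\Omega)$) gives, in the same way, $P_H(\Omega)\setminus\sigma_H(\Omega)=A_H(\sigma_H(\Omega))=B_H(\Omega)$. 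So at the level of emptiness one already gets $A_H(\Omega)\neq\varnothing\iff P_H(\Omega)\neq\Omega$ and $B_H(\Omega)\neq\varnothing\iff P_H(\Omega)\neq\sigma_H(\Omega)$, which is exactly (iv) up to the passage from ``nonempty'' to ``nonempty interior''.

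For that passage I would treat (a) (then (b) is (a) applied to the polarizer $\sigma_H(H)$ and the open set $\sigma_H(\Omega)$, using $B_H(\Omega)=A_H(\sigma_H(\Omega))$ from the previous step): take $x_0\in A_H(\Omega)$. Then $\sigma_H(x_0)\in\Omega$, and since $x_0\in H$ the point $\sigma_H(x_0)$ lies in the \emph{open} half-space $\sigma_H(H)$; as $\Omega$ is open, a small ball $B$ around $\sigma_H(x_0)$ is contained in $\Omega\cap\sigma_H(H)$, and its reflection $\sigma_H(B)$ is then a ball around $x_0$ contained in $\sigma_H(\Omega)\cap H$. Using openness of $\Omega$ once more, after shrinking $B$ one checks that $\sigma_H(B)$ avoids $\Omega$, so that $\sigma_H(B)$ is a nonempty open subset of $A_H(\Omega)$, proving (a).

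Finally (ii). I would rewrite $P_H(\Omega)\cap H=(\Omega\cup\sigma_H(\Omega))\cap H$ and introduce the folding map $\pi\colon\mathbb{R}^d\to\overline H$, equal to the identity on $\overline H$ and to $\sigma_H$ on $\mathbb{R}^d\setminus\overline H$. Then $\pi$ is continuous, so $C:=\pi(\Omega)=(\Omega\cap\overline H)\cup(\sigma_H(\Omega)\cap H)$ is connected; moreover, because $\Omega$ is open, $C$ is relatively open in $\overline H$ (a point of $\Omega\cap\partial H$ has a full half-ball relative neighbourhood inside $C$). Since $C\cap H=P_H(\Omega)\cap H$, it remains to show $C\cap H$ is connected. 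Suppose $C\cap H=A\sqcup B$ with $A,B$ open, nonempty, disjoint. Each $z\in C\cap\partial H$ has a relative half-ball neighbourhood $B(z,\rho)\cap\overline H\subseteq C$, and $B(z,\rho)\cap H$ is convex, hence connected, so it lies entirely in $A$ or entirely in $B$; this partitions $C\cap\partial H$ into relatively open pieces $K_A$ and $K_B$, and then one checks $A\cup K_A$ and $B\cup K_B$ are nonempty, disjoint, open in $C$ with union $C$, contradicting connectedness of $C$. I expect this reduction — from ``$C$ connected'' to ``$C\cap H$ connected'' — to be the only non-routine point, since it fails for general connected sets and genuinely uses the openness of $\Omega$ (via the half-ball neighbourhoods); the nonempty$\,\Rightarrow\,$nonempty-interior upgrades in (iv) likewise rest on openness, whereas (i) and the set identities underlying (iii) are pure set algebra.
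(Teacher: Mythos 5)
The paper does not actually prove this proposition; it is cited from \cite{AK-NB2023Mono} and \cite{Anoop-Ashok23}, so there is nothing internal to compare against. Evaluating your argument on its own: the treatment of (i) is routine and correct, and your identity $P_H(\Omega)\setminus\Omega=A_H(\Omega)$, $\Omega\setminus P_H(\Omega)=\sigma_H(A_H(\Omega))$ is a clean way to get (iii) and to reduce (iv) at the level of \emph{emptiness}. The folding-map argument for (ii), with the half-ball trick to pass from connectedness of $C=\pi(\Omega)$ to connectedness of $C\cap H$, is also sound — note though that it genuinely uses openness of $\Omega$, which the paper's statement of (ii) omits but which is necessary (for a non-open connected set such as the circle $\{x^2+(y-\tfrac12)^2=1\}$ with $H=\{y<0\}$, $P_H(\Omega)\cap H$ splits into two disjoint relatively closed arcs).

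The genuine gap is in (iv), at the step ``after shrinking $B$ one checks that $\sigma_H(B)$ avoids $\Omega$.'' Openness of $\Omega$ makes $\Omega^{\mathsf{c}}$ closed, not open, so from $x_0\in A_H(\Omega)\subseteq\Omega^{\mathsf{c}}$ you cannot conclude that some ball about $x_0$ misses $\Omega$: every point of $A_H(\Omega)$ may lie on $\partial\Omega$. Indeed, $A_H(\Omega)$ has empty interior precisely when $\sigma_H(\Omega)\cap H\subseteq\overline{\Omega}$, and this is compatible with $A_H(\Omega)\neq\emptyset$ even for open connected $\Omega$: take $\Omega=(0,1)^2\setminus\bigl(\{1/4\}\times(0,1/2]\bigr)\subset\mathbb{R}^2$ and $H=\{x_1<1/2\}$. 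Then $\sigma_H(\Omega)\cap H=(0,1/2)\times(0,1)$ and $A_H(\Omega)=\{1/4\}\times(0,1/2]$, nonempty but with empty interior, while $P_H(\Omega)\neq\Omega$ by (iii). So the equivalence in (iv)(a), as you (and the paper's statement) read it, is false for general open sets. What rescues (iv) is not openness alone but the fact that the domains the paper actually uses are \emph{regular open}, i.e.\ $\Omega=\interior(\overline{\Omega})$ — true for $\mathcal{C}^{1,\beta}$ or Lipschitz domains. Under that hypothesis the argument is one line: $\sigma_H(\Omega)\cap H$ is open, so $\sigma_H(\Omega)\cap H\subseteq\overline{\Omega}$ forces $\sigma_H(\Omega)\cap H\subseteq\interior(\overline{\Omega})=\Omega$, i.e.\ $A_H(\Omega)=\emptyset$; contrapositively, $A_H(\Omega)\neq\emptyset$ implies nonempty interior. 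Replace your local ball argument by this global observation, and state explicitly the regular-openness hypothesis that it requires.
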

Now, we list the invariance property of the $L^p$-norm of a function and its gradient under polarization~\cite{Anoop-Ashok23, Schaftingen05, Weth2010}, and the fractional P\'olya-Szeg\"o inequality under polarization, see~\cite[Page 4818]{Beckner92} or~\cite[Propsition~2.3]{AK-NB2023Mono}. 
\begin{proposition}\label{norm-properties}
    Let $H\in \mathcal{H}$, $\Omega \subseteq \mathbb{R}^d$ be open, and let $u:\Omega \ra [0,\infty )$ be measurable. If $u\in L^p(\Omega )$ for some $p\in [1,\infty )$, then $P_H(u)\in L^p(P_H(\Omega ))$ with $\left\|P_H(u)\right\|_{L^p(P_H(\Omega ))} = \left\|u\right\|_{L^p(\Omega )}$. Furthermore, if $u \in W^{1,p}_0(\Omega )$, then $P_H(u) \in W^{1,p}_0(P_H(\Omega ))$ with 
    \begin{equation*}
        \int _{P_H(\Omega )} |\nabla (P_Hu)|^p \dx = \int_\Omega |\nabla u|^p \dx,\mbox{ and } \left[P_H(u)\right]_{s,p} \leq \left[u\right]_{s,p}.
    \end{equation*}
\end{proposition}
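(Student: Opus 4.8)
The plan is to transfer everything to the zero extension $\widetilde u$ of $u$ to $\mathbb{R}^d$ and to exploit that $P_H$ acts on $\widetilde u$ by, over each reflected pair $\{x,\sigma_H(x)\}$, merely sorting the two values $\widetilde u(x),\widetilde u(\sigma_H(x))$ so that the larger one sits in $H$. The first step is a preliminary remark, immediate from the two cases in the definition of $P_Hu$ together with $u\ge 0$: $\{\,x:P_H\widetilde u(x)\neq 0\,\}\subseteq P_H(\{u\neq 0\})\subseteq P_H(\Omega)$, the last inclusion being the monotonicity of $P_H$ on sets. This identifies the zero extension $\widetilde{P_Hu}$ with $P_H\widetilde u$, so that every norm of $P_Hu$ over $P_H(\Omega)$ equals the corresponding integral of $P_H\widetilde u$ over $\mathbb{R}^d$.

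Next, write $v_1:=\widetilde u|_H$ and $v_2:=\widetilde u\circ\sigma_H|_H$, so that $P_H\widetilde u=\max\{v_1,v_2\}$ on $H$ and $P_H\widetilde u=\min\{v_1,v_2\}\circ\sigma_H$ on $H^{\mathsf c}$. Splitting $\int_{\mathbb{R}^d}=\int_H+\int_{H^{\mathsf c}}$ and applying the change of variables $x\mapsto\sigma_H(x)$ (an isometry with unit Jacobian) in the integral over $H^{\mathsf c}$, the $L^p$-identity follows at once from $\max\{v_1,v_2\}^p+\min\{v_1,v_2\}^p=v_1^p+v_2^p$ (valid since $v_i\ge 0$). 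For the gradient the same decomposition applies once one records the a.e.\ pointwise identity $|\nabla\max\{v_1,v_2\}|^p+|\nabla\min\{v_1,v_2\}|^p=|\nabla v_1|^p+|\nabla v_2|^p$ (on $\{v_1>v_2\}$ and $\{v_1<v_2\}$ this is a relabeling, and on $\{v_1=v_2\}$ all four gradients coincide a.e.), together with the fact that the orthogonal reflection $\sigma_H$ preserves $|\nabla(\cdot)|$ under pullback. Finally, $P_Hu\in W^{1,p}_0(P_H(\Omega))$: since $\max$ and $\min$ of $W^{1,p}$-functions are again $W^{1,p}$, the pieces $\max\{v_1,v_2\}$ on $H$ and $\min\{v_1,v_2\}\circ\sigma_H$ on $H^{\mathsf c}$ are $W^{1,p}$ on the respective half-spaces, and they glue across $\partial H$ because $v_1=v_2$ there ($\sigma_H=\mathrm{id}$ on $\partial H$), so the traces of $\max$ and $\min$ agree; hence $P_H\widetilde u\in W^{1,p}(\mathbb{R}^d)$, and it vanishes off $P_H(\Omega)$ by the preliminary remark.

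For the fractional seminorm I would split $\mathbb{R}^d\times\mathbb{R}^d$ into the four products $H\times H$, $H\times H^{\mathsf c}$, $H^{\mathsf c}\times H$, $H^{\mathsf c}\times H^{\mathsf c}$, and apply the reflection to each of the last three (unit Jacobian, $|x-y|$ preserved, and $|\sigma_H(x)-y|=|x-\sigma_H(y)|$ for $x,y\in H$) to rewrite the whole integral over $H\times H$:
\begin{multline*}
[\widetilde u]_{s,p}^p=\iint_{H\times H}\frac{|v_1(x)-v_1(y)|^p+|v_2(x)-v_2(y)|^p}{|x-y|^{d+sp}}\,\dx\,\dy\\
+\iint_{H\times H}\frac{|v_1(x)-v_2(y)|^p+|v_2(x)-v_1(y)|^p}{|x-\sigma_H(y)|^{d+sp}}\,\dx\,\dy,
\end{multline*}
and likewise for $P_H\widetilde u$ with $v_1,v_2$ replaced pointwise by $\max\{v_1,v_2\},\min\{v_1,v_2\}$. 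It then suffices to establish, for all $a_1,a_2,b_1,b_2\ge 0$ and all weights $\mu\ge\nu\ge 0$, the pointwise four-point inequality
\begin{multline*}
\mu\big(|A-B|^p+|\alpha-\beta|^p\big)+\nu\big(|A-\beta|^p+|\alpha-B|^p\big)\\
\le\mu\big(|a_1-b_1|^p+|a_2-b_2|^p\big)+\nu\big(|a_1-b_2|^p+|a_2-b_1|^p\big),
\end{multline*}
where $A=\max\{a_1,a_2\}\ge\alpha=\min\{a_1,a_2\}$ and $B=\max\{b_1,b_2\}\ge\beta=\min\{b_1,b_2\}$; here $\mu=|x-y|^{-(d+sp)}\ge|x-\sigma_H(y)|^{-(d+sp)}=\nu$ because $|x-y|<|x-\sigma_H(y)|$ for $x,y\in H$, which is the only geometric input (and is already used in the proof of Proposition~\ref{SCP}). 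This inequality rests on two facts: first, the multisets $\{|A-B|,|A-\beta|,|\alpha-B|,|\alpha-\beta|\}$ and $\{|a_i-b_j|:i,j\in\{1,2\}\}$ coincide, so both sides carry the same total mass and only the weighting differs; second, $|A-B|^p+|\alpha-\beta|^p\le|A-\beta|^p+|\alpha-B|^p$, which follows from the convexity of $t\mapsto|t|^p$ and majorization (the pair $\{A-\beta,\alpha-B\}$ has the same sum as $\{A-B,\alpha-\beta\}$ and $A-\beta\ge\max\{A-B,\alpha-\beta\}$). Since $\{|a_1-b_1|,|a_2-b_2|\}$ is one of the two perfect matchings of that multiset, a short case check using these two facts and $\mu\ge\nu$ gives the inequality. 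Integrating over $H\times H$ yields $[P_Hu]_{s,p}\le[u]_{s,p}$.

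The main obstacle is the fractional part: organizing the four-cell decomposition so that the cross terms recombine over $H\times H$ against the single kernel $|x-\sigma_H(y)|^{-(d+sp)}$, and proving the weighted four-point rearrangement inequality (the convexity/majorization step and the case check). The $L^p$- and gradient-identities and the Sobolev membership are soft once the support inclusion and the elementary $\max/\min$ identities are in hand. Alternatively, all three assertions may be quoted directly from \cite{Beckner92,Schaftingen05,Weth2010,AK-NB2023Mono}.
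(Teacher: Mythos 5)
The paper does not prove Proposition~\ref{norm-properties}; it is stated and immediately quoted from the literature (the $L^p$ and gradient identities from \cite{Anoop-Ashok23,Schaftingen05,Weth2010}, the fractional P\'olya--Szeg\H{o} inequality from \cite[p.~4818]{Beckner92} or \cite[Prop.~2.3]{AK-NB2023Mono}), so there is no in-paper argument to compare against. Your proof is correct and self-contained, and it in fact reproduces the standard argument that those references use: identify $\widetilde{P_H u}$ with $P_H\widetilde u$ via the support inclusion $\{P_H\widetilde u\neq 0\}\subseteq P_H(\Omega)$, reduce everything to $H$ by the reflection $\sigma_H$, use the multiset identity $\{\max\{a,b\},\min\{a,b\}\}=\{a,b\}$ for the $L^p$ and gradient norms (plus the a.e.\ gradient identity on $\{v_1=v_2\}$, and trace-matching across $\partial H$ for the $W^{1,p}$ membership), and for the Gagliardo seminorm the four-cell decomposition of $\mathbb{R}^d\times\mathbb{R}^d$ reduced to $H\times H$ with the two kernels $|x-y|^{-(d+sp)}\geq|x-\sigma_H(y)|^{-(d+sp)}$. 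The heart of the fractional part is exactly your weighted four-point inequality, and both ingredients you isolate are right: the multiset of the four differences is invariant under the polarization relabeling, and $|A-B|^p+|\alpha-\beta|^p\le|A-\beta|^p+|\alpha-B|^p$ by Karamata/majorization for the convex map $t\mapsto|t|^p$, after which the case check over the two matchings together with $\mu\ge\nu$ closes the argument. This is precisely Beckner's two-point rearrangement lemma, so your route is the same one the citations carry out. One cosmetic remark: the sum identity $\max\{a,b\}^p+\min\{a,b\}^p=a^p+b^p$ does not actually need $a,b\ge 0$ (it is a multiset relabeling); nonnegativity of $u$ is what you need for the support inclusion $\{P_H\widetilde u\neq 0\}\subseteq P_H(\{u\neq 0\})$, which you state correctly.
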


\noi \textbf{Proof of Theorem~\ref{Thm-unified}.} Let $u\in W^{1,p}_0(\Omega )$ be a non-negative eigenfunction associated with $\Lambda _{\mathcal{L}^s_p}(\Omega )$ with $\left\|u\right\|_{L^p(\Omega )} =1$. By Proposition~\ref{norm-properties} and the variational characterization of $\Lambda _{\mathcal{L}^s_p}(\cdot )$, we get that $P_H(u) \in W^{1,p}_0(P_H(\Omega ))$ with $\left\|P_H(u)\right\|_{L^p(P_H(\Omega ))} = \left\|u\right\|_{L^p(\Omega )} =1$, and  
    \begin{equation}\label{ineq-Thm~3.1}
    \begin{split}
         \Lambda _{\mathcal{L}^s_p}(P_H(\Omega )) &\leq \norm{\nabla (P_Hu)}_{L^p(P_H(\Omega ))} + \left[P_H(u)\right]_{s,p} \leq \norm{\nabla u}_{L^p(\Omega)} + \left[u\right]_{s,p} = \Lambda _{\mathcal{L}^s_p}(\Omega ). 
    \end{split}
    \end{equation}
    Now, we prove the strict inequality~\eqref{StrictFKtype}. Assume that $\Omega $ is a $\mathcal{C}^{1,\beta }$-domain for some $\beta \in (0,1)$, and $\Omega \neq P_H(\Omega )\neq \sigma _H(\Omega )$. From Proposition~\ref{regular},  $u, P_H(u) \in \mathcal{C}(\mathbb{R}^d )$.  We consider the following sets
    \begin{align*}
    \mathcal{M}_u = \Bigl\{x\in P_H(\Omega )\cap H : P_H u(x) > u(x)\Bigr\} \text{ and } \mathcal{N}_u = \Bigl\{x\in P_H(\Omega )\cap H : P_H u(x) = u(x)\Bigr\}.
    \end{align*}
    Observe that $ \mathcal{M}_u$ is open and $\mathcal{N}_u$ is relatively closed in $P_H(\Omega )\cap H$, and further $P_H(\Omega )\cap H = \mathcal{M}_u \sqcup \mathcal{N}_u$. First, we find a set $B$ in the open set $\Omega \cap H$ such that $B\cap \mathcal{M}_u \neq \emptyset .$ 
    Using the assumption $\Omega \neq P_H(\Omega )\neq \sigma _H(\Omega )$, notice that both $A_H(\Omega )$ and $B_H(\Omega )$ have non-empty interiors (see Proposition~\ref{Propo-ball}-(iv)). Further, by the strong maximum principle (Proposition~\ref{SMP}), $u>0$ in $\Omega $. By the definition, we have $P_H u\geq u$ in $P_H(\Omega )\cap H$, and 
    \begin{equation}\label{eq-3.1}
        \begin{aligned}
        \text{in } A_H(\Omega ): &~~~ u=0, ~u\circ \sigma _H>0, \text{ and hence } P_H u=u\circ \sigma _H>u;\\
        \text{in } B_H(\Omega ): &~~~ u>0, ~u\circ \sigma _H=0, \text{ and hence } P_H u=u.
    \end{aligned}
    \end{equation}
    By~\eqref{eq-3.1}$, \mathcal{M}_u\supseteq A_H(\Omega )$, and $\mathcal{N}_u\supseteq B_H(\Omega )$.  Therefore, using $P_H(\Omega )\cap H =(\Omega \cap H)\sqcup A_H(\Omega ) = \mathcal{M}_u \sqcup \mathcal{N}_u$, $\mathcal{N}_u$ is relatively closed in  $P_H(\Omega )\cap H$, and $\Omega \cap H$ is an open set, we see that $\mathcal{N}_u \subsetneq \Omega \cap H$, because if $\mathcal{N}_u=\Omega \cap H$ then the connected set $P_H(\Omega )\cap H$ is a union of two disjoint open sets which is not possible.
    Hence, the set $B:=\mathcal{M}_u \cap \Omega \cap H$ is a nonempty open set. Therefore, the sets $\mathcal{N}_u, B \subset \Omega \cap H$ have the following properties:
    \begin{equation}\label{eq-ball}
        P_H u > u \text{ in } B 
        \text{ and }
        P_H u \equiv u \text{ in } \mathcal{N}_u.
    \end{equation}
    On the contrary to~\eqref{StrictFKtype}, assume that $\Lambda _{\mathcal{L}^s_p}(P_H(\Omega )) = \Lambda _{\mathcal{L}^s_p}(\Omega ) = \lambda .$ Now, the equality holds in~\eqref{ineq-Thm~3.1}, and hence $P_H(u)$ becomes a non-negative minimizer of the following problem:
    \begin{align*}
    \Lambda _{\mathcal{L}^s_p}(P_H(\Omega ) = \min_{v \in W_0^{1,p}(P_H(\Omega )} \left\{ \norm{\nabla v}_{L^p(P_H(\Omega ))}^p + \left[v\right]_{s,p}^p : \left\|v\right\|_{L^p(P_H(\Omega ))} =1 \right\}.  
    \end{align*}
    As a consequence, the following equation holds weakly:
    \begin{equation}\label{eqn_1}
    \mathcal{L}^s_p P_H(u) = \Lambda _{\mathcal{L}^s_p} (P_H(\Omega)) |P_H(u)|^{p-2}P_H(u) \mbox{ in } P_H(\Omega), \; P_H(u)=0 \mbox{ in } \mathbb{R}^d\setminus P_H(\Omega).
    \end{equation}
    Since $\Om \cap H \subset P_H(\Omega  \cap H$, both $u$ and $P_H u$ are weak solutions of the following equation:
    \begin{align*}
        \mathcal{L}_p^s w -\lambda |w|^{p-2} w =0 \text{ in } \Omega \cap H.
    \end{align*}
    Using $P_H u \geq u$ in $\Omega \cap H$, we see that the following holds weakly:
    \begin{align*}
        \mathcal{L}_p^s P_H(u) -\mathcal{L}_p^s u = \lambda (|P_H(u)|^{p-2}P_H(u) - |u|^{p-2} u) \geq 0 \text{ in } \Omega \cap H.
    \end{align*}
    Since  $B_H(\Omega ) \subseteq \mathcal{N}_u \subsetneq \Omega \cap H$, we see that $\mathcal{N}_u \subseteq \mathcal{A}$ with $\mathcal{N}_u$ having a nonempty interior. Moreover, $|\mathcal{B}| > 0$, since $B \subseteq \mathcal{B}$ and $B$ is a nonempty open set. Now, we apply the strong comparison type principle (Proposition~\ref{SCP}) to get a contradiction to~\eqref{eq-ball}. Therefore, the strict inequality~\eqref{StrictFKtype} holds.
\qed

In the spirit of~\cite[Theorem~3.3]{AK-NB2023Mono} and~\cite[Theorem~1.5]{Anoop-Ashok23}, we can apply the strict Faber-Krahn inequality (Theorem~\ref{Thm-unified}) to the family of annular domains $B_R(0)\setminus \overline{B}_r(t e_1)$ for $0<r<R$ and $0\leq t<R-r.$ For $0\leq t_1 <t_2<R-r$, let $a=\frac{t_1+t_2}{2}$ and $H_a:=\left\{ x=(x_1,x')\in \mathbb{R}^d : x_1 <a\right\}$. Then, as in~\cite[Proposition~3.2-(iii)]{AK-NB2023Mono}, we get $P_{H_a}(\Omega _{t_1})=\Omega _{t_2}$. Now, applying Theorem~\ref{Thm-unified}, we get the following result.
\begin{theorem}[Strict monotonicity of $\Lambda _{\mathcal{L}^s_p}(\cdot)$ over annular domains]\label{strictmonotonicity}
    Let $p \in (1, \infty),$ $s \in (0,1)$, and $0 < r <R$. Then $\Lambda _{\mathcal{L}^s_p}\left(B_R(0)\setminus \overline{B}_r(t e_1)\right)$ is strictly decreasing for $0\leq t < R-r$. In particular, 
    \begin{equation*}
        \Lambda _{\mathcal{L}^s_p}\left(B_R(0)\setminus \overline{B}_r(0)\right)=\max \limits _{0\leq t < R-r}\Lambda _{\mathcal{L}^s_p}\left(B_R(0)\setminus \overline{B}_r(t e_1)\right).
    \end{equation*}
\end{theorem}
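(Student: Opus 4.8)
\noi\textbf{Proof proposal for Theorem~\ref{strictmonotonicity}.}
The plan is to read off the monotonicity from the strict Faber--Krahn inequality of Theorem~\ref{Thm-unified}, applied along the polarization chain formed by the annular domains. Write $\Omega_t:=B_R(0)\setminus\overline{B}_r(te_1)$ for $0\le t<R-r$. Since $t+r<R$ we have $\overline{B}_r(te_1)\subset B_R(0)$, so $\Omega_t$ is a bounded connected open set whose boundary is a disjoint union of two spheres; in particular $\Omega_t$ is of class $\mathcal{C}^{1,\beta}$ for every $\beta\in(0,1)$, so the regularity hypothesis of Theorem~\ref{Thm-unified} is satisfied.

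Fix $0\le t_1<t_2<R-r$, and take $a=\tfrac{t_1+t_2}{2}$ and $H_a=\{x=(x_1,x')\in\mathbb{R}^d:x_1<a\}\in\mathcal{H}$ as in the discussion preceding the statement, so that $P_{H_a}(\Omega_{t_1})=\Omega_{t_2}$. It then remains to verify the two non-degeneracy conditions $\Omega_{t_1}\ne P_{H_a}(\Omega_{t_1})\ne\sigma_{H_a}(\Omega_{t_1})$ required by the strict part of Theorem~\ref{Thm-unified}. Since $t_1\ne t_2$, the removed balls $\overline{B}_r(t_1e_1)$ and $\overline{B}_r(t_2e_1)$ are distinct, hence $\Omega_{t_1}\ne\Omega_{t_2}=P_{H_a}(\Omega_{t_1})$. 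On the other hand $\sigma_{H_a}(\Omega_{t_1})=\sigma_{H_a}(B_R(0))\setminus\sigma_{H_a}(\overline{B}_r(t_1e_1))=B_R(2ae_1)\setminus\overline{B}_r(t_2e_1)$, and since $2a=t_1+t_2>0$ the set $B_R(2ae_1)\setminus B_R(0)$ has positive measure and is disjoint from $\overline{B}_r(t_2e_1)\subset B_R(0)$; any of its points lies in $\sigma_{H_a}(\Omega_{t_1})$ but not in $\Omega_{t_2}\subset B_R(0)$, so $\sigma_{H_a}(\Omega_{t_1})\ne\Omega_{t_2}=P_{H_a}(\Omega_{t_1})$. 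As $\Omega_{t_1}$ is moreover a bounded $\mathcal{C}^{1,\beta}$-domain, Theorem~\ref{Thm-unified} gives
\[
\Lambda_{\mathcal{L}^s_p}(\Omega_{t_2})=\Lambda_{\mathcal{L}^s_p}\big(P_{H_a}(\Omega_{t_1})\big)<\Lambda_{\mathcal{L}^s_p}(\Omega_{t_1}).
\]
Since $0\le t_1<t_2<R-r$ were arbitrary, $t\mapsto\Lambda_{\mathcal{L}^s_p}(\Omega_t)$ is strictly decreasing on $[0,R-r)$; specialising to $t_1=0$ shows that its maximum over $0\le t<R-r$ is attained at $t=0$ and equals $\Lambda_{\mathcal{L}^s_p}(B_R(0)\setminus\overline{B}_r(0))$, which is the asserted identity.

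I do not anticipate a genuine obstacle, since the analytic core of the argument is already packaged in Theorem~\ref{Thm-unified} and what remains is essentially geometric bookkeeping. The point needing the most care is the input $P_{H_a}(\Omega_{t_1})=\Omega_{t_2}$ --- that polarizing with respect to $H_a$ leaves the outer ball $B_R(0)$ invariant while transporting the removed ball $\overline{B}_r(t_1e_1)$ onto $\overline{B}_r(t_2e_1)$ --- which is exactly what is recorded via \cite[Proposition~3.2-(iii)]{AK-NB2023Mono}. One should also keep an eye on the borderline case $t_1=0$, where $\Omega_{t_1}$ is the concentric annulus: there $2a=t_2>0$ still, so the reflected outer ball $B_R(2ae_1)$ is genuinely displaced and the hypothesis $P_{H_a}(\Omega_{t_1})\ne\sigma_{H_a}(\Omega_{t_1})$ persists; in particular no pair $t_1<t_2$ is exceptional.
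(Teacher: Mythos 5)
Your proposal is correct and follows the same route the paper takes: set $a=\tfrac{t_1+t_2}{2}$, use $P_{H_a}(\Omega_{t_1})=\Omega_{t_2}$ from the cited proposition, and invoke the strict Faber--Krahn inequality of Theorem~\ref{Thm-unified}. You merely spell out what the paper leaves implicit, namely the $\mathcal{C}^{1,\beta}$ regularity of the annular domain and the verification of the non-degeneracy hypotheses $\Omega_{t_1}\neq P_{H_a}(\Omega_{t_1})\neq\sigma_{H_a}(\Omega_{t_1})$, both of which you check correctly.
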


Now, we consider the generalized mixed local-nonlocal operator 
\begin{align*}
    \mathcal{G}_p^s := - a \Delta _p + b (-\Delta _p)^s, \text{ with } a, b \ge 0,
\end{align*}
and consider the following eigenvalue problem as in~\eqref{Eigen_eqn}:
\begin{equation}\label{Eigen_eqn_general}
    \begin{aligned}
        \mathcal{G}_p^s u&= \Lambda \abs{u}^{p-2} u  \mbox{ in } \Omega ,\\
        u&=0 \mbox{ in } \mathbb{R}^d\setminus \Omega.
    \end{aligned}
\end{equation}
The solution space for the equation~\eqref{Eigen_eqn_general} is $W^{s, p}_0( \Omega)$ when $a =0$, and $W^{1, p}_0( \Omega)$ when $a>0$. We denote by $\mathbb{X}(\Omega)$ as the solution space for~\eqref{Eigen_eqn_general}. Arguing essentially as in the proof of Theorem~\ref{Thm-unified}, an analogous strict Faber-Krahn inequality holds for the first eigenvalue of~\eqref{Eigen_eqn_general}, as stated below.

\begin{theorem}\label{Thm-unified-general} 
Let $p \in (1, \infty)$, $s \in (0,1)$, $a \ge 0$, and $b >0$. Let $H$ be a polarizer, and $\Omega \subset \mathbb{R}^d$ be a bounded domain. Then, the least eigenvalue of~\eqref{Eigen_eqn_general} given by
\begin{equation*}
    \Lambda _{\mathcal{G}_p^s}(\Omega ) := \inf \left\{ a \norm{\nabla u}_{L^p(\Omega )}^p + b [u]^p_{s,p}: u \in \mathbb{X}(\Omega) \setminus \{0\} \text{ with } \|u\|_{L^p(\Omega )}=1\right\}
\end{equation*}
satisfies  $$\Lambda _{\mathcal{G}^s_p}(P_H(\Omega )) \leq \Lambda _{\mathcal{G}^s_p}(\Omega ).$$ Further, if $\Omega $ is of class $\mathcal{C}^{1,\beta }$ for some $\beta \in (0,1)$ and $\Omega \neq P_H(\Omega ) \neq \sigma _H(\Omega )$, then $$\Lambda _{\mathcal{G}^s_p}(P_H(\Omega )) < \Lambda _{\mathcal{G}^s_p}(\Omega ).$$
\end{theorem}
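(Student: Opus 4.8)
The plan is to mirror the proof of Theorem~\ref{Thm-unified} verbatim, tracking the coefficients $a\ge 0$ and $b>0$ and noting that every ingredient used there survives the generalization. First I would observe that the non-strict inequality $\Lambda_{\mathcal{G}^s_p}(P_H(\Omega))\le\Lambda_{\mathcal{G}^s_p}(\Omega)$ is immediate from Proposition~\ref{norm-properties}: if $u\in\mathbb{X}(\Omega)$ is a non-negative first eigenfunction with $\|u\|_{L^p(\Omega)}=1$, then $P_H(u)\in\mathbb{X}(P_H(\Omega))$ with the same $L^p$-norm, $\int_{P_H(\Omega)}|\nabla(P_Hu)|^p\dx=\int_\Omega|\nabla u|^p\dx$, and $[P_H(u)]_{s,p}\le[u]_{s,p}$; hence $a\|\nabla(P_Hu)\|_{L^p}^p+b[P_Hu]_{s,p}^p\le a\|\nabla u\|_{L^p}^p+b[u]_{s,p}^p$, which after testing against $P_H(u)$ gives the claimed bound. (When $a=0$ one works in $W^{s,p}_0$ throughout and the gradient term simply disappears; when $a>0$ one works in $W^{1,p}_0$ exactly as before.)

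For the strict inequality, assume $\Omega$ is $\mathcal{C}^{1,\beta}$ and $\Omega\neq P_H(\Omega)\neq\sigma_H(\Omega)$, and suppose for contradiction that $\Lambda_{\mathcal{G}^s_p}(P_H(\Omega))=\Lambda_{\mathcal{G}^s_p}(\Omega)=\lambda$. Then $P_H(u)$ is a non-negative minimizer over $P_H(\Omega)$, so it solves $\mathcal{G}^s_p P_H(u)=\lambda|P_H(u)|^{p-2}P_H(u)$ weakly in $P_H(\Omega)$, and both $u$ and $P_H(u)$ solve $\mathcal{G}^s_p w-\lambda|w|^{p-2}w=0$ weakly in $\Omega\cap H$ (using $\Omega\cap H\subset P_H(\Omega)\cap H$ and the definition of $P_H$ on the halfspace). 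Since $P_Hu\ge u$ in $\Omega\cap H$ and $t\mapsto|t|^{p-2}t$ is increasing, the pair $(P_Hu,u)$ satisfies
\[
\mathcal{G}^s_p P_H(u)-\mathcal{G}^s_p u=\lambda\bigl(|P_Hu|^{p-2}P_Hu-|u|^{p-2}u\bigr)\ge 0
\]
weakly in $\Omega\cap H$. I would then need a strong comparison principle for $\mathcal{G}^s_p$ in place of Proposition~\ref{SCP}. Rerunning that proof with the generalized operator, the local term contributes $a\int_\Omega(|\nabla v|^{p-2}\nabla v-|\nabla u|^{p-2}\nabla u)\cdot\nabla\phi\dx$, which vanishes whenever $\supp\nabla\phi$ lies in the coincidence set (where $\nabla v=\nabla u$), exactly as in~\eqref{scp-1}; the nonlocal term is scaled by $b>0$, which does not affect any sign in the decomposition $I=I_1-I_2+I_3$. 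Hence the conclusion carries over: if the sets $\mathcal{A}$ and $\mathcal{B}$ of Proposition~\ref{SCP} satisfy $|\mathcal{B}|>0$, then $\mathcal{A}$ has empty interior.

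To close the argument I would reproduce, word for word, the set-theoretic part of the proof of Theorem~\ref{Thm-unified}: by Proposition~\ref{Propo-ball}-(iv) both $A_H(\Omega)$ and $B_H(\Omega)$ have non-empty interiors; by the strong maximum principle (Proposition~\ref{SMP}, which applies since $b>0$ makes the nonlocal term genuinely present, and which holds verbatim for $\mathcal{G}^s_p$) one has $u>0$ in $\Omega$; therefore $\mathcal{M}_u\supseteq A_H(\Omega)$ and $\mathcal{N}_u\supseteq B_H(\Omega)$, the connectedness of $P_H(\Omega)\cap H$ (Proposition~\ref{Propo-ball}-(ii)) forces $\mathcal{N}_u\subsetneq\Omega\cap H$, and $B:=\mathcal{M}_u\cap\Omega\cap H$ is a non-empty open set with $P_Hu>u$ on $B$ and $P_Hu\equiv u$ on $\mathcal{N}_u\supseteq B_H(\Omega)$. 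Applying the generalized strong comparison principle with $\mathcal{N}_u\subseteq\mathcal{A}$ (non-empty interior) and $B\subseteq\mathcal{B}$ (so $|\mathcal{B}|>0$) yields a contradiction, proving strictness. The only genuinely new point — and the main obstacle — is checking that the strong maximum principle and the strong comparison principle go through for $\mathcal{G}^s_p$; for the maximum principle the logarithmic energy estimate must be redone with the coefficients $a,b$, but since $b>0$ the nonlocal half alone already drives the argument (as in the proof of Proposition~\ref{SMP}, where only the nonlocal part is used), and for the comparison principle the crucial structural feature — vanishing of the local term on the coincidence set — is exactly Remark~\ref{rmk1}, which is insensitive to the positive constant $a$. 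I would remark that the borderline case $a=0,b>0$ recovers the purely nonlocal result of~\cite{AK-NB2023Mono}, so nothing is lost there either.
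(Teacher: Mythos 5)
Your proposal is correct and follows essentially the same (one-sentence) argument the paper gives, which simply asserts that Theorem~\ref{Thm-unified-general} follows by arguing as in Theorem~\ref{Thm-unified}. You correctly isolate the two structural facts that make the coefficients $a\ge 0$, $b>0$ harmless: the local term in the strong comparison principle vanishes on the coincidence set for any $a$ (so the contradiction is driven entirely by the nonlocal term with $b>0$), and the strong maximum principle already discards the local half of the logarithmic energy estimate, so it survives with $a\ge 0$ and $b>0$ unchanged.
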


\begin{remark}
The above theorem does not consider the case $b =0$ due to the lack of a strong comparison principle involving $u$ and $P_H(u)$ (see Remark~\ref{rmk1}). Nevertheless, in the case $b =0$ and $p>\frac{2d+2}{d+2}$, using the classical strong comparison principle for the $p$-Laplace operator due to Sciunzi~\cite[Theorem~1.4]{Sciunzi2014}, a strict Faber-Krahn type inequality for $\Lambda _{\mathcal{G}_p^s}$ is proved in~\cite[Theorem~1.3]{Anoop-Ashok23}. 
\end{remark}

We state the following result, approximating the Schwarz symmetrization by a sequence of polarizers.
\begin{proposition}{\rm \cite[Theorem 4.4]{V2006}}\label{Sequence_pol}
    Let $*$ be the Schwarz symmetrization on $\mathbb{R}^d$. Then, there exists a sequence of polarizers $(H_m)_{m\in \mathbb{N}}$ in $\mathcal{H}$ with $0\in \partial H_m$ such that, for any $1 \leq p <\infty ,$ if $u \in L^p(\mathbb{R}^d)$ is non-negative, then the sequence
    \begin{align}
        u_m := P_{H_1\cdots H_m}(u)
    \end{align}
    converges to $u^*$ in $L^p(\mathbb{R}^d )$, i.e., 
    \begin{equation*}
        \lim_{m \ra \infty }\norm{u_m - u^*}_p =0.
    \end{equation*}
\end{proposition}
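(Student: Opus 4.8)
Since the statement is \cite[Theorem~4.4]{V2006}, the plan is to reproduce Van Schaftingen's proof, built on four ingredients. The first is a contraction--invariance package. For every polarizer $H$ and every $p\in[1,\infty)$, polarization is an $L^p$-contraction on non-negative functions, $\norm{P_H u-P_H v}_{L^p(\rd)}\le\norm{u-v}_{L^p(\rd)}$, and it preserves the $L^p$-norm; both follow by pairing $x$ with $\sigma_H(x)$ and invoking the elementary inequality $\abs{a_1\vee a_2-b_1\vee b_2}^p+\abs{a_1\wedge a_2-b_1\wedge b_2}^p\le\abs{a_1-b_1}^p+\abs{a_2-b_2}^p$ for $a_i,b_i\ge0$. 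The analogous facts for the Schwarz symmetrization, $\norm{u^*-v^*}_{L^p(\rd)}\le\norm{u-v}_{L^p(\rd)}$ and $\norm{u^*}_{L^p(\rd)}=\norm{u}_{L^p(\rd)}$, are classical. Finally, I would restrict to polarizers $H$ under which every radially non-increasing function is invariant --- e.g. those with $0\in H$, since then $\abs{x}<\abs{\sigma_H(x)}$ for all $x\in H$, whence $P_Hu^*=u^*$ --- and record the complementary measure-theoretic fact that a non-negative $L^p$-function invariant under \emph{all} such polarizations is necessarily radially non-increasing. From $P_Hu^*=u^*$ one gets $\norm{P_Hu-u^*}_{L^p(\rd)}=\norm{P_Hu-P_Hu^*}_{L^p(\rd)}\le\norm{u-u^*}_{L^p(\rd)}$, so the distance of an orbit point to $u^*$ never increases along such polarizations.

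By this package and the triangle inequality it suffices to exhibit one sequence $(H_m)$ of such polarizers, drawn from a fixed countable family $\{G_j\}_{j\in\mathbb N}$ dense among them (density is enough because $H\mapsto P_Hw$ is $L^p$-continuous), along which $P_{H_1\cdots H_m}v\to v^*$ in $L^p(\rd)$ for every $v$ in a fixed countable set $\mathcal D$ of non-negative functions dense in $L^p(\rd)$ for \emph{every} $p\in[1,\infty)$ simultaneously --- for instance finite non-negative rational combinations of indicators of balls with rational data; since each such $v$ lies in $L^1\cap L^\infty$ and the orbit below is tight, $L^1$-convergence for $v\in\mathcal D$ upgrades to $L^p$-convergence for all $p$. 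For a single $v\in\mathcal D$, take the strictly radially decreasing weight $g(x)=e^{-\abs{x}}$ (which lies in every $L^q(\rd)$, so $\int_{\rd}w\,g$ is defined for $w\in L^p(\rd)$, any $p$). Pairing $x$ with $\sigma_H(x)$ shows $\int_{\rd}(P_Hw)\,g\ge\int_{\rd}w\,g$ for these polarizers, with equality only when $P_Hw=w$; and by the Hardy--Littlewood inequality $\int_{\rd}w\,g\le\int_{\rd}v^*\,g$ whenever $w$ is equimeasurable with $v$, with equality only at $w=v^*$. Letting $\mathcal O(v)$ be the $L^p$-closure of the set of all finite polarizations $P_{G_{i_1}}\cdots P_{G_{i_n}}v$, one checks $\mathcal O(v)$ is relatively compact in $L^p(\rd)$ --- its elements are all bounded by $\norm{v}_{L^\infty(\rd)}$, equimeasurable with $v$, and tight since the first moment $w\mapsto\int_{\rd}\abs{x}\,w(x)\dx$ is non-increasing under these polarizations --- so $w\mapsto\int_{\rd}w\,g$ attains its maximum over $\mathcal O(v)$ at some $w_0$; if $w_0\neq v^*$ then $w_0$ is not radially non-increasing, so $P_Hw_0\neq w_0$ for some admissible $H$, giving $\int_{\rd}(P_Hw_0)\,g>\int_{\rd}w_0\,g$ with $P_Hw_0\in\mathcal O(v)$ --- a contradiction. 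Hence $v^*\in\mathcal O(v)$, i.e. finitely many polarizations from $\{G_j\}$ bring $v$ within any $\varepsilon>0$ of $v^*$.

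The universal sequence is then assembled by a block/diagonal argument. Enumerate $\mathcal D=\{v_1,v_2,\dots\}$, pick a block $B_1$ of $G_j$'s with $\norm{P_{B_1}v_1-v_1^*}_{L^p(\rd)}\le1$, then a block $B_2$ with $\norm{P_{B_1B_2}v_2-v_2^*}_{L^p(\rd)}\le\tfrac12$ --- by the invariance $P_{G_j}v_1^*=v_1^*$, the new block does not undo the bound already achieved for $v_1$ --- and inductively a block $B_k$ after which each of $v_1,\dots,v_k$ is, and stays, within $1/k$ of its symmetrization. Concatenating the blocks gives $(H_m)$ with $P_{H_1\cdots H_m}v_k\to v_k^*$ in $L^p(\rd)$ for each $k$, and then, approximating a general non-negative $u\in L^p(\rd)$ by members of $\mathcal D$ and using the contraction--invariance package, $P_{H_1\cdots H_m}u\to u^*$ in $L^p(\rd)$ for every $p\in[1,\infty)$.

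The step I expect to be the main obstacle is the relative compactness of the orbit $\mathcal O(v)$ in $L^p(\rd)$: tails are controlled by the monotonicity of the first moment, but ruling out small-scale oscillation of iterated polarizations needs a genuine argument, and one must additionally run the whole construction uniformly in $p\in[1,\infty)$ and handle the non-strict equality cases that arise when $p=1$. With those in hand, the remaining steps are soft.
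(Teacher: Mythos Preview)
The paper does not supply a proof of this proposition; it is quoted from \cite{V2006} and used as a black box in the proof of Theorem~\ref{FK inequality}. There is thus nothing in the paper to compare your sketch against, and what you have written is effectively a proposed reconstruction of Van Schaftingen's argument.

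Two remarks on the sketch itself. First, you correctly work with polarizers satisfying $0\in H$, for which $|x|<|\sigma_H(x)|$ on $H$ and hence both $P_Hu^*=u^*$ and strict decrease of the first moment hold. Be aware, though, that the hypothesis $0\in\partial H_m$ recorded in the displayed statement is too restrictive for the conclusion to be true: when $0\in\partial H$ the reflection $\sigma_H$ is a linear isometry, so every spherical average $r\mapsto\int_{\{|x|=r\}}u$ is preserved by $P_H$, and for instance $u=\chi_{B_1(2e_1)}$ (which vanishes on $\{|x|<1\}$) can never be pushed towards $u^*=\chi_{B_1(0)}$ by such polarizations. Your argument therefore establishes the intended version with $0\in\overline{H_m}$, not the proposition as literally written; you should flag this discrepancy rather than silently correct it.

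Second, the obstacle you single out is real and is exactly where the sketch stops short. Tightness from the first-moment bound and the uniform $L^\infty$ bound on $\mathcal O(v)$ do not by themselves give strong $L^p$-precompactness: the Riesz--Kolmogorov criterion also demands $\sup_{w\in\mathcal O(v)}\|w(\cdot+h)-w\|_p\to0$ as $h\to0$, and iterated polarizations of a ball indicator can produce level sets with arbitrarily rough boundary, so this is not automatic. Weak compactness is available but of no help, since weak limits need not be equimeasurable with $v$ and your maximiser $w_0$ would then escape the Hardy--Littlewood equality characterisation. One clean repair, if you want to keep the variational route, is to let $\mathcal D$ consist of indicators of sets of finite perimeter and use that polarization does not increase perimeter; the compact embedding $BV\hookrightarrow L^1_{\mathrm{loc}}$ then supplies the missing equicontinuity on bounded regions, and your tightness argument handles the tails.
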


\noi \textbf{Proof of Theorem~\ref{FK inequality}:}
    Let $u\in W^{1,p}_0(\Omega )$ be the non-negative eigenfunction of~\eqref{Eigen_eqn} corresponding to $\Lambda _{\mathcal{L}^s_p}(\Omega )$ with $\norm{u}_{L^p(\Omega )}=1$. First, we prove the following P\'olya-Szeg\"o inequality for $u$:
    \begin{align}\label{P-Sz_General}
        \int _{\Omega ^*} |\nabla u^*|^p\dx + [u^*]_{s,p} \leq \int _\Omega |\nabla u|^p\dx + [u]_{s,p},
    \end{align}
    where $u^*$ is the Schwarz symmetrization of the function $u$ in $\mathbb{R}^d$. Let $(H_m)_{m\in \mathbb{N}} \subset \mathcal{H}_{*}:=\{H\in \mathcal{H} : 0\in \partial H\}$ be the sequence of polarizers, and $(u_m)_{m\in \mathbb{N}}$ be the sequence of functions as in Proposition~\ref{Sequence_pol}. Since $u\in L^p(\mathbb{R}^d)$ is non-negative, by 
    Proposition~\ref{Sequence_pol} we know that 
    \begin{itemize}
        \item $u_m \longrightarrow u^*$ in  $L^p(\mathbb{R}^d),$
        \item $u_m \longrightarrow u^*$ pointwise a.e. in $\mathbb{R}^d$  (up to a subsequence), and
        \item $\norm{ u_m }_{L^p(\mathbb{R}^d)} \longrightarrow \norm{ u^* }_{L^p(\mathbb{R}^d)}.$ 
    \end{itemize}
    Let $B_R(0)\subset \mathbb{R}^d $ be an open ball large enough such that $\Omega \subseteq B_R(0)$. Observe that, for every $H\in \mathcal{H}$ the support of $P_H(u)$ is a subset of  $P_H({\rm supp}(u))$ (see \cite[Proposition~2.14]{Anoop-Ashok23}). Now, for every $m \in \N$, we get $u_m =0$ in $\mathbb{R}^d \setminus \Omega _m$, where $\Omega _m := P_{H_1\cdots H_m}(\Omega ) \subseteq B_R(0)$, and hence $u_m \in W^{1,p}_0(B_R(0))$. Further, using Proposition~\ref{norm-properties}, we have
    \begin{align*}
        \int_{ \mathbb{R}^d } \abs{u_m}^p\dx = \int_{ \mathbb{R}^d } \abs{u}^p \dx=1, \text{ and }  \int _{\mathbb{R}^d}|\nabla u_m|^p\dx + [u_m]_{s,p} \leq \int _\Omega |\nabla u|^p\dx + [u]_{s,p}.
    \end{align*}
    Therefore, the sequence $(u_m)_{m\in \mathbb{N}}$ is bounded in $W_0^{1,p}(B_R(0))$. By the reflexivity of $W_0^{1,p}(B_R(0))$, we get $u_m \rightharpoonup \widetilde{u}$ in $W_0^{1,p}(B_R(0))$, and by the compactness of the embedding $W_0^{1,p}(B_R(0)) \hookrightarrow L^p( B_R(0))$, we have $u_m \rightarrow \widetilde{u}$ in $L^p(B_R(0))$. Hence, up to a subsequence, $u_m \ra \widetilde{u}$ pointwise a.e. in $B_R(0)$. Now, by the uniqueness of the limit, $\widetilde{u} = u^*$ a.e. in $B_R(0)$. Moreover, since $(u_m)_{m\in \mathbb{N}}$ is bounded in $W_0^{ s, p}(B_R(0))$, $u_m \rightharpoonup u^*$ in $W_0^{ s, p}(B_R(0))$ as well. Applying the weak lower semicontinuity of the norms in $W_0^{ 1, p}(B_R(0))$ and  $W_0^{ s, p}(B_R(0))$,  we obtain
    \begin{align*}
        &\int _{\mathbb{R}^d}|\nabla u^*|^p\dx + [u^*]_{s,p} \leq \liminf_{m \ra \infty} \left( \int _{\mathbb{R}^d}|\nabla u_m|^p\dx + [u_m]_{s,p} \right) \leq \int _\Omega |\nabla u|^p\dx + [u]_{s,p}.
    \end{align*}
    Since ${\rm supp}\,(u^*) \subseteq \Omega ^*$, we get the required inequality~\eqref{P-Sz_General}. Indeed $u^*\in W^{1,p}_0(\Omega ^*).$ Now,  
    the variational characterization~\eqref{var-char-Ineq-Mixed} and the inequality~\eqref{P-Sz_General} imply the Faber-Krahn inequality
    \begin{equation*}
        \Lambda _{\mathcal{L}^s_p}(\Omega ^*) \leq \Lambda _{\mathcal{L}^s_p}(\Omega ).
    \end{equation*}
    Next, assume that $\Omega $ is of class $\mathcal{C}^{1,\beta }$ for some $\beta \in (0,1)$. Suppose the equality $\Lambda _{\mathcal{L}^s_p}(\Omega ^*) = \Lambda _{\mathcal{L}^s_p}(\Omega )$ holds. To prove $\Omega$ is a ball, we first show that $\Omega $ is radial with respect to a point $x_0 \in \mathbb{R}^d$. Here $x_0\in \Omega$ is chosen such that $d(x_0,\partial \Omega )= \sup \{d(x, \partial \Omega ) : x\in \overline{\Omega }\}.$ Let $H\in \mathcal{H}_{x_0}:=\{H\in \mathcal{H} : x_0 \in \partial H\}$, and notice that $|P_H(\Omega )|=|\Omega |=|\Omega ^*|$. Using the Faber-Krahn inequalities~\eqref{ineq-Thm~3.1} and~\eqref{Polarized FK}, we get
    \begin{align*}
        \Lambda _{\mathcal{L}^s_p} (P_H(\Omega )) \leq \Lambda _{\mathcal{L}^s_p}(\Omega ) \text{ and }
        \Lambda _{\mathcal{L}^s_p}(\Omega ^*) \leq \Lambda _{\mathcal{L}^s_p}(P_H(\Omega )).
    \end{align*}
    Therefore, we have $\Lambda _{\mathcal{L}^s_p} (P_H(\Omega )) = \Lambda _{\mathcal{L}^s_p}(\Omega )$, and hence by Theorem~\ref{Thm-unified} we get
    \begin{equation}\label{eqn-3.8}
        P_H(\Omega )=\Omega \text{ or } P_H(\Omega )=\sigma _H(\Omega ) \text{ for every } H\in \mathcal{H}_{x_0}.
    \end{equation} 
    Recall that, \emph{`$\Omega $ is radial with respect to $x_0 \in \mathbb{R}^d$ if and only if $\sigma _H(\Omega )=\Omega $ for any $H\in \mathcal{H}_{x_0}$.'} On the contrary, assume that $\Omega $ is not radial with respect to $x_0$. Then, $\overline{\Omega }^\mathsf{c}\cap \sigma _H(\Omega )$ is a non-empty open set for some $H\in \mathcal{H}_{x_0}$. Therefore $\overline{\Omega }^\mathsf{c}\cap \sigma _H(\Omega )\cap \partial H \neq \emptyset $ as $x_0 \in \Omega $. Further, the sets $\overline{\Omega }^\mathsf{c}\cap \sigma _H(\Omega )\cap H \subseteq A_H(\Omega )$ and $\overline{\Omega }^\mathsf{c} \cap \sigma _H(\Omega )\cap \sigma_H({H})\subseteq \sigma _H(B_H(\Omega ))$ are also non-empty open sets. Hence, both $A_H(\Omega )$ and $B_H(\Omega )$ have non-empty interiors. Now, by Proposition~\ref{Propo-ball}-(iv), we get
    \begin{equation*}
        P_H(\Omega )\neq \Omega \text{ and } P_H(\Omega )\neq \sigma _H(\Omega ).
    \end{equation*}
    This contradicts~\eqref{eqn-3.8}, and therefore $\Omega $ is radial with respect to $x_0$. Since $\Omega $ is connected, we conclude that either $\Omega $ is a ball $\Omega ^*$ or a concentric annular domain $B_R(0)\setminus \overline{B}_r(0)$ for some $0<r<R$, up to a translation. Further, using the facts
    \begin{align*}
        |\Omega ^*|=|B_R(0)\setminus \overline{B}_r(0)|=|B_R(0)\setminus \overline{B}_r(t e_1)| \text{ for } 0\leq t <R-r,
    \end{align*}
    the Faber-Krahn inequality~\eqref{classical FK} implies that
    \begin{align*}
        \Lambda _{\mathcal{L}^s_p}(\Omega ^*) \leq \Lambda _{\mathcal{L}^s_p}\left(B_R(0)\setminus \overline{B}_r(t e_1)\right) \mbox{ for any } 0< t <R-r.
    \end{align*}
    Now, by the strict monotonicity of $\Lambda _{\mathcal{L}^s_p}(\cdot)$ over annular domains (Theorem~\ref{strictmonotonicity}), we obtain
    \begin{align*}
        \Lambda _{\mathcal{L}^s_p}(\Omega ^*) \leq \Lambda _{\mathcal{L}^s_p}\left(B_R(0)\setminus \overline{B}_r(t e_1)\right) < \Lambda _{\mathcal{L}^s_p}\left(B_R(0)\setminus \overline{B}_r(0)\right) \mbox{ for any } 0< t <R-r. 
    \end{align*}
    Therefore, $\Omega$ must be the ball $\Omega ^*$ (up to a translation).
\qed

For $a \geq 0$ and $b >0$, using Theorem~\ref{Thm-unified-general} and adapting the proof of Theorem~\ref{FK inequality}, we have the following classical Faber-Krahn inequality for $\mathcal{G}_p^s$ without any dimension restriction $d>sp$.
\begin{theorem}
Let $p \in (1, \infty)$, $s \in (0,1)$, $a \ge 0$, and $b >0$. For a bounded domain $\Omega \subset \mathbb{R}^d$ of class $\mathcal{C}^{1,\beta }$ with $\beta \in (0,1)$, let $\Omega ^*$ be the open ball centered at the origin in $\mathbb{R}^d$ such that $|\Omega ^*|=|\Omega |$. Then $\Lambda _{\mathcal{G}_p^s}(\Omega ^*)\leq \Lambda _{\mathcal{G}_p^s}(\Omega )$ with the equality occurs if and only if $\Omega =\Omega ^*$ (up to a translation).
\end{theorem}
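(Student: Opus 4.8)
The plan is to replicate the proof of Theorem~\ref{FK inequality}, replacing $\mathcal{L}_p^s$ by $\mathcal{G}_p^s$ and Theorem~\ref{Thm-unified} by Theorem~\ref{Thm-unified-general}. Let $u\in\mathbb{X}(\Omega)$ be a non-negative first eigenfunction for $\Lambda_{\mathcal{G}_p^s}(\Omega)$ with $\|u\|_{L^p(\Omega)}=1$, and let $u^*$ be its Schwarz symmetrization. The first step is the P\'olya-Szeg\"o inequality
\begin{equation*}
a\int_{\Omega^*}|\nabla u^*|^p\dx + b[u^*]_{s,p}^p \le a\int_\Omega|\nabla u|^p\dx + b[u]_{s,p}^p.
\end{equation*}
To obtain it I would take the polarizers $(H_m)_{m\in\mathbb{N}}\subset\mathcal{H}_{*}$ from Proposition~\ref{Sequence_pol}, set $u_m:=P_{H_1\cdots H_m}(u)$, and fix a ball $B_R(0)\supseteq\Omega$. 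Since polarization does not enlarge supports, $u_m$ vanishes outside $\Omega_m:=P_{H_1\cdots H_m}(\Omega)\subseteq B_R(0)$, and Proposition~\ref{norm-properties} gives $\|u_m\|_{L^p(\mathbb{R}^d)}=1$ together with $a\|\nabla u_m\|_{L^p}^p + b[u_m]_{s,p}^p\le a\|\nabla u\|_{L^p}^p + b[u]_{s,p}^p$. Hence $(u_m)$ is bounded in $W_0^{1,p}(B_R(0))$ when $a>0$ and in $W_0^{s,p}(B_R(0))$ when $a=0$; in either case, weak compactness together with the compact embedding into $L^p(B_R(0))$ and Proposition~\ref{Sequence_pol} identifies the weak limit as $u^*$, and weak lower semicontinuity of the norms yields the displayed inequality. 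Since $u^*$ is supported in $\Omega^*$, we get $u^*\in\mathbb{X}(\Omega^*)$, and the variational characterization of $\Lambda_{\mathcal{G}_p^s}(\cdot)$ gives $\Lambda_{\mathcal{G}_p^s}(\Omega^*)\le\Lambda_{\mathcal{G}_p^s}(\Omega)$.

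For the rigidity part, assume $\Omega$ is $\mathcal{C}^{1,\beta}$ and $\Lambda_{\mathcal{G}_p^s}(\Omega^*)=\Lambda_{\mathcal{G}_p^s}(\Omega)$. Pick $x_0\in\Omega$ realizing $d(x_0,\partial\Omega)=\sup\{d(x,\partial\Omega):x\in\overline\Omega\}$. For any $H\in\mathcal{H}_{x_0}$ we have $|P_H(\Omega)|=|\Omega|=|\Omega^*|$, so combining the Faber-Krahn inequality just proved (applied to $P_H(\Omega)$) with the polarized inequality of Theorem~\ref{Thm-unified-general} sandwiches $\Lambda_{\mathcal{G}_p^s}(P_H(\Omega))$ between $\Lambda_{\mathcal{G}_p^s}(\Omega^*)$ and $\Lambda_{\mathcal{G}_p^s}(\Omega)$, forcing $\Lambda_{\mathcal{G}_p^s}(P_H(\Omega))=\Lambda_{\mathcal{G}_p^s}(\Omega)$; by the strict part of Theorem~\ref{Thm-unified-general} this gives $P_H(\Omega)=\Omega$ or $P_H(\Omega)=\sigma_H(\Omega)$ for every $H\in\mathcal{H}_{x_0}$. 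As in the proof of Theorem~\ref{FK inequality}, if $\Omega$ were not radial about $x_0$ then $\overline\Omega^\mathsf{c}\cap\sigma_H(\Omega)$ would be a non-empty open set for some such $H$; since $x_0\in\Omega$, both $A_H(\Omega)$ and $B_H(\Omega)$ would then have non-empty interior, so Proposition~\ref{Propo-ball}-(iv) would give $P_H(\Omega)\neq\Omega$ and $P_H(\Omega)\neq\sigma_H(\Omega)$, a contradiction. Thus $\Omega$ is radial about $x_0$, and being connected it is either the ball $\Omega^*$ or a concentric annulus $B_R(0)\setminus\overline{B}_r(0)$ (up to a translation).

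It remains to exclude the annulus. Here I would first record the $\mathcal{G}_p^s$-analogue of Theorem~\ref{strictmonotonicity}: since $P_{H_a}(B_R(0)\setminus\overline{B}_r(t_1e_1))=B_R(0)\setminus\overline{B}_r(t_2e_1)$ for $a=\tfrac{t_1+t_2}{2}$ and $0\le t_1<t_2<R-r$, and these annuli are of class $\mathcal{C}^{1,\beta}$ and are genuinely moved by the polarization, Theorem~\ref{Thm-unified-general} shows that $t\mapsto\Lambda_{\mathcal{G}_p^s}(B_R(0)\setminus\overline{B}_r(te_1))$ is strictly decreasing on $[0,R-r)$. Then, if $\Omega=B_R(0)\setminus\overline{B}_r(0)$ up to translation, using $|\Omega^*|=|B_R(0)\setminus\overline{B}_r(te_1)|$ for $0\le t<R-r$ together with the Faber-Krahn inequality and this strict monotonicity yields $\Lambda_{\mathcal{G}_p^s}(\Omega^*)\le\Lambda_{\mathcal{G}_p^s}(B_R(0)\setminus\overline{B}_r(te_1))<\Lambda_{\mathcal{G}_p^s}(\Omega)$ for $0<t<R-r$, contradicting the assumed equality; hence $\Omega=\Omega^*$ up to a translation. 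I expect the only step requiring genuine care to be the P\'olya-Szeg\"o inequality, where the weak compactness argument must be carried out in $W_0^{1,p}(B_R(0))$ or in $W_0^{s,p}(B_R(0))$ according to whether $a>0$ or $a=0$, together with first isolating the strict monotonicity over annuli for $\mathcal{G}_p^s$; everything else transfers verbatim from the proof of Theorem~\ref{FK inequality}, so there is no serious obstacle.
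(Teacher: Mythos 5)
Your proposal is correct and matches the paper's intended argument exactly: the paper itself states that this theorem follows by ``using Theorem~\ref{Thm-unified-general} and adapting the proof of Theorem~\ref{FK inequality},'' which is precisely what you carry out, including the correct observation that for $a=0$ the weak compactness step must be performed in $W_0^{s,p}(B_R(0))$ rather than $W_0^{1,p}(B_R(0))$, and that a $\mathcal{G}_p^s$-analogue of the strict annular monotonicity (Theorem~\ref{strictmonotonicity}) must first be recorded before excluding the annulus.
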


 \noi \textbf{Acknowledgments.}
We thank the anonymous reviewer for their insightful comments. This work is partly funded by the Department of Atomic Energy, Government of India, under project no. 12-R$\&$D-TFR-5.01-0520.

\end{document}